\newtheorem{theorem}{Theorem}[section]
\newtheorem{corollary}[theorem]{Corollary}
\newtheorem{lemma}[theorem]{Lemma}
\newtheorem{proposition}[theorem]{Proposition}
\theoremstyle{definition}
\newtheorem{definition}[theorem]{Definition}
\newtheorem{remark}[theorem]{Remark}
\newtheorem{example}[theorem]{Example}
\newtheorem{question}[theorem]{Question}
\theoremstyle{parrafo}
\newcommand{\R}{\mathbb{R}}
\begin{document}

\title[]{Kissing numbers and the centered maximal operator}

\author{J. M. Aldaz}
\address{Instituto de Ciencias Matem\'aticas (CSIC-UAM-UC3M-UCM) and Departamento de 
Mate\-m\'aticas,
Universidad  Aut\'onoma de Madrid, Cantoblanco 28049, Madrid, Spain.}
\email{jesus.munarriz@uam.es}
\email{jesus.munarriz@icmat.es}

\thanks{2010 {\em Mathematical Subject Classification.} 41A35}

\thanks{The author was partially supported by Grant MTM2015-65792-P of the
MINECO of Spain, and also by by ICMAT Severo Ochoa project SEV-2015-0554 (MINECO)}







\begin{abstract} We prove that in a  metric measure space $X$, if for some 
	$p \in (1,\infty)$ there are uniform bounds (independent of the measure) for 
the weak type $(p,p)$  of the centered maximal operator, then $X$ satisfies
a certain geometric condition, 
the  Besicovitch  intersection property, which in turn
implies the uniform  weak type $(1,1)$  of the centered operator.  Thus, the following characterization is obtained:  the centered maximal operator satisfies  uniform weak type $(1,1)$ bounds  if and only if the space
$X$ has the  Besicovitch intersection property.

In $\mathbb{R}^d$ with any norm, the constants coming from the Besicovitch intersection property
are bounded above  by the translative  kissing numbers. The extensive literature on kissing numbers allows us to obtain, first, sharp estimates on the uniform bounds satisfied by the centered maximal operators  defined by arbitrary  norms on the plane, second, sharp estimates in every dimension when the $\ell_\infty$ norm is used, and third, improved 
estimates in all dimensions when considering euclidean balls, 
as well as the sharp
constant in dimension  3. 

Additionally, we prove that the existence of uniform $L^1$ bounds for the averaging operators associated to arbitrary measures and radii, is equivalent to a weaker variant of the Besicovitch intersection property.
\end{abstract}


\maketitle


\markboth{J. M. Aldaz}{The centered maximal operator}

\section {Introduction} 

It is well known that the centered maximal operator $M_\mu$ 
is of weak type (1,1) for arbitrary, locally finite Borel measures $\mu$ on $\mathbb{R}^d$, with bounds exponential 
in $d$ but  independent of 
the measure,
because
of the Besicovitch covering theorem. 

Here we show, in the context of metric measure
spaces $(X, d, \mu)$, that the full force of the theorem is not needed;
in fact, the exact condition is given by the Besicovitch intersection 
property, which controlls the maximal overlap of families of balls such that each ball does not contain the center of any other ball in the collection. Since in metric spaces, in general neither centers nor radii of balls are unique, the preceding statement needs to be made more precise.

\begin{definition}  \label{BIP} A  collection $\mathcal{C}$ of balls in a metric space $(X, d)$ is a {\em Besicovitch family} if there is a choice function assigning a center and a radius to each ball in $\mathcal{C}$, in such a way that 
	for every pair  of distinct balls $B(x, r), B(y,s) \in \mathcal{C}$,
	$x \notin B (y,s)$ and  $y \notin B (x,r)$. Denote by
	$\mathcal{BF}(X,d)$ the collection of all Besicovitch families
	of $(X, d)$.  The 
	 {\em Besicovitch constant}  of $(X, d)$ is
	 	\begin{equation}\label{BC}
	L(X, d) := \sup \left\{	\sum_{B(x,  r) \in \mathcal{C}} \mathbf{1}_{B(x,  r)} (y): y \in X, \ \mathcal{C} \in \mathcal{BF}(X,d) \right\}.
	 \end{equation}
	 We say that $(X, d)$ has the {\em   Besicovitch
		Intersection Property} if $L(X, d) < \infty$.
\end{definition} 

 One of our main results is the following 

\begin{theorem} \label{main} The Besicovitch constant	$L(X, d)$ is equal to $\sup_{\mu}\|M_\mu \|_{L^1\to L^{1, \infty}}$, 
	where the supremum is taken over all $\tau$-additive, locally finite Borel measures $\mu$ on $(X, d)$. 
\end{theorem}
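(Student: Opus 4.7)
The plan is to prove the two inequalities separately. For the inequality $L(X,d) \leq \sup_\mu \|M_\mu\|_{L^1\to L^{1,\infty}}$, given any integer $N < L(X,d)$ the definition of the Besicovitch constant provides a Besicovitch family $\{B(x_i,r_i)\}_{i=1}^{N}$ and a point $y_0$ contained in every $B(x_i,r_i)$; note that $y_0 \notin \{x_1,\dots,x_N\}$, since otherwise $y_0=x_k$ would force $x_k\in B(x_j,r_j)$ for some $j\neq k$, violating the Besicovitch property. I would test the weak-type bound against the atomic measure $\mu_\varepsilon := \delta_{y_0} + \varepsilon \sum_{i=1}^{N}\delta_{x_i}$, which is locally finite and $\tau$-additive, and the function $f := \mathbf{1}_{\{y_0\}}$, so that $\|f\|_{L^1(\mu_\varepsilon)}=1$. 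Since $x_j \notin B(x_i,r_i)$ for $j\neq i$, one has $\mu_\varepsilon(B(x_i,r_i)) = 1 + \varepsilon$ and hence $M_{\mu_\varepsilon} f(x_i) \geq (1+\varepsilon)^{-1}$ at each of the $N$ centres, producing a weak-type ratio at least $N\varepsilon/(1+\varepsilon)$. Letting $\varepsilon\to\infty$ and then $N\nearrow L(X,d)$ closes this direction.

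For the reverse inequality, fix a $\tau$-additive locally finite $\mu$, $f\in L^1(\mu)$, and $\lambda>0$, and set $E:=\{M_\mu f>\lambda\}$. At each $x\in E$ pick a witness ball $B(x,r_x)$ with $\int_{B(x,r_x)}|f|\,d\mu > \lambda\,\mu(B(x,r_x))$. The heart of the proof is a selection lemma: from $\{B(x,r_x)\}_{x\in E}$ extract an index set $J\subseteq E$ such that $\{B(x,r_x)\}_{x\in J}$ is a Besicovitch family and $\bigcup_{x\in J} B(x,r_x)\supseteq E$. Granting this selection, for every finite $F\subseteq J$ the Besicovitch intersection property gives $\sum_{x\in F}\mathbf{1}_{B(x,r_x)}\leq L(X,d)$ pointwise, whence
\[
\lambda\,\mu\Bigl(\bigcup_{x\in F} B(x,r_x)\Bigr) \leq \sum_{x\in F} \int_{B(x,r_x)}|f|\,d\mu = \int \Bigl(\sum_{x\in F}\mathbf{1}_{B(x,r_x)}\Bigr) |f|\,d\mu \leq L(X,d)\,\|f\|_1.
\]
Applying $\tau$-additivity to the open-ball union $\bigcup_{x\in J} B(x,r_x)\supseteq E$ transfers the bound from finite $F$ to all of $J$, yielding $\lambda\mu(E)\leq L(X,d)\|f\|_1$, as required.

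The main obstacle is the selection step. For a \emph{finite} subfamily of witness balls the natural approach is a greedy procedure sorted by decreasing radius: include $B(x_i,r_i)$ whenever $x_i$ is not already covered by a previously selected ball. Because earlier balls have larger radii, the condition $x_j\notin B(x_i,r_i)$ for $i<j$ automatically forces $x_i\notin B(x_j,r_j)$, so the selected subfamily is Besicovitch and covers every centre. To treat an arbitrary $\{B(x,r_x)\}_{x\in E}$ in a general metric space I would first truncate to bounded-radius subfamilies $E_R:=\{x\in E:r_x\leq R\}$, stratify $E_R$ into dyadic radius layers, run a transfinite version of the greedy selection layer by layer (adding a separation condition within each layer to preserve the Besicovitch property among balls of comparable size), and then let $R\to\infty$ via $\tau$-additivity of $\mu$. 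Preserving both the Besicovitch property and the coverage \emph{simultaneously} across layers, without any doubling hypothesis on $\mu$, is where the delicate bookkeeping lies.
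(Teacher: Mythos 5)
Your first inequality ($L(X,d)\le\sup_\mu\|M_\mu\|_{L^1\to L^{1,\infty}}$) is correct and is essentially the paper's own construction: the paper uses $\mu_c=c\delta_y+\sum_i\delta_{x_i}$ with $f_c=c^{-1}\mathbf{1}_{\{y\}}$ and lets $c\to0$, which is your $\mu_\varepsilon$ after rescaling with $c=1/\varepsilon$. The problem is the reverse inequality. There you attempt to run the covering argument directly against an arbitrary $\tau$-additive measure, which forces you to extract, from the \emph{infinite} (possibly uncountable) family $\{B(x,r_x)\}_{x\in E}$, a single Besicovitch subfamily still covering $E$. You correctly identify this selection lemma as ``the main obstacle,'' but you do not prove it, and the fix you sketch does not work as described. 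The greedy selection by decreasing radius is only available for finite families (an infinite set of radii need not admit a well-ordering by decreasing size). In the dyadic stratification, the cross-layer interactions are actually harmless (a later, strictly smaller ball cannot capture an earlier center), but \emph{within} a layer the radii only agree up to a factor of $2$: selecting $x_\beta$ because it is uncovered gives $d(x_\alpha,x_\beta)>r_\alpha$, which does not imply $x_\alpha\notin B(x_\beta,r_\beta)$. If you repair this by imposing the extra separation condition you mention, then a center can be rejected without being covered (namely when some already-selected $x'$ lies in $B(x,r_x)$ but $x\notin B(x',r_{x'})$), and coverage of $E$ — hence the inequality $\mu(E)\le\mu(\bigcup_J B)$ — is lost. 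A Zorn's-lemma maximal Besicovitch subfamily fails for the same reason. So the argument as written has a genuine hole, and it is not a routine one: obtaining a bounded-overlap cover from the intersection property alone for infinite families is precisely the delicate point.

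The paper avoids this entirely by discretizing the \emph{measure} rather than wrestling with the infinite family of balls. Lemma \ref{simple} reduces to simple functions and a localized operator; Theorem \ref{discretization} then uses $\tau$-additivity to pick finitely many centers $x_1,\dots,x_n$ whose small balls almost exhaust the open set $O_t$, builds the finite subalgebra $\mathcal{A}$ generated by the level sets of $f$ and these finitely many balls, and replaces $\mu$ by a finitely supported atomic measure $\nu$ agreeing with $\mu$ on $\mathcal{A}$. After that reduction, Lemma \ref{discreteequiv} only ever applies the greedy selection to \emph{finitely many} balls, where sorting by non-increasing radius is legitimate and your own observation (earlier balls are larger, so non-coverage of later centers forces the Besicovitch property) closes the argument. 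If you want to salvage your route, you would essentially have to prove a general ``weak BCP implies a covering statement'' theorem for arbitrary families in arbitrary metric spaces, which is substantially harder than the discretization and is not needed; the $\tau$-additivity hypothesis is exactly what makes the reduction to finite atomic measures possible, and that is where it should be spent.
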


Thus, $M_\mu$ satisfies  weak type $(1,1)$ bounds  that are uniform in
$\mu$, if and only if 
$X$ has the  Besicovitch intersection property, and the 
optimal constant is the same in both cases. 

We mention that the preceding result applies to all Borel measures in separable metric spaces, since there $\tau$-additivity is automatic (cf. Definition \ref{tau}).

Furthermore, 
if for some $1 < p < \infty$ the centered maximal operator $M_\mu$
satisfies uniform weak type $(p,p)$ bounds,
then  $X$ has the  Besicovitch intersection property. So we obtain
an extrapolation result, from uniform
weak type $(p,p)$ to uniform
weak type $(1,1)$ (cf. Theorem \ref{extrapolation}).

Recall that spaces satisfying the conclusion of the Besicovitch covering theorem tend to be rather special, cf. \cite[pp. 7-8]{He}. The  Besicovitch
intersection property has clear advantages over stronger hypotheses
of 
Besicovitch type:   more spaces have it (for one example, see \cite[Example
3.4]{LeRi}) and
it is easier to 
handle technically (cf. \cite{LeRi}). For us, the main application here is that it leads to substantially better bounds than the previously known  ones, when $(X, d) = (\mathbb{R}^d, \| \cdot\|)$ and $ \| \cdot\|$ is a norm.  The reason why one can use the Besicovitch intersection property instead of stronger hypotheses in order to obtain uniform bounds for the maximal operators, is that arbitrary measures can be replaced by finite sums of weighted Dirac deltas, by the ``local discretization of measures'' presented in Theorem \ref{discretization}.

Considerable
efforts have been made to determine the boundedness properties of $M_\mu$ 
in many  classes of spaces (as a very small
 sample, we mention  \cite{Io}, \cite{Li},  \cite{Li1}, \cite{NaTa},
\cite{Str}).  When
boundedness is known, it is often interesting   to improve on the constants, finding the sharp
ones if possible.  Starting with the work of E. M. Stein, cf. \cite{St},
and J. Bourgain, cf. \cite{Bou}, the case of Lebesgue measure in $\mathbb{R}^d$
 has
been extensively studied, see \cite{DeGuMa} and the references contained
therein. But the sharp constant for Lebesgue measure is known only in
dimension 1, cf. \cite{Me}.

We show (cf. Theorem  \ref{kiss})
that $L(\mathbb{R}^d, \| \cdot\|)$,  the Besicovitch constant of $(\mathbb{R}^d, \| \cdot\|)$,
equals  the maximum number of 
 unit
balls that can touch a central unit ball without
touching each other (the so called strict Hadwiger number) so in particular, $L(\mathbb{R}^d, \| \cdot\|)$
 is bounded above by the translative kissing number
of $(\mathbb{R}^d, \| \cdot\|)$. This allows us to apply the extensive literature on kissing numbers to maximal function inequalities, thereby obtaining substantial improvements regarding the previously known bounds: on the plane, the uniform bounds are always either 4 or 5, depending on whether the unit ball of the given norm is a parallelogram or not.
 When 
$\| \cdot\| =  \| \cdot\|_\infty$,  we have that $L(\mathbb{R}^d, \| \cdot\|_\infty) = 2^d$ for every dimension $d$;
 furthermore, there is a locally finite Borel measure $\mu$ on $(\mathbb{R}^d, \| \cdot\|_\infty)$
 for which $\|M_\mu \|_{L^1\to L^{1, \infty}} = 2^d$, so
 this bound is attained.
 From the available information regarding kissing numbers for euclidean balls, we obtain the
sharp bound 
 $L(\mathbb{R}^3, \| \cdot\|_2) = 12$; the constant $L(\mathbb{R}^4, \| \cdot\|_2)$ is either
 $22, 23$ or $24$; for arbitrary $d$, we have the asymptotic 
 estimates
$$
(1 + o(1)) 1.1547^{ d} \le L(\mathbb{R}^d, \| \cdot\|_2)  \le 1.3205^{(1 + o(1)) d}.
$$
We remark that  for $d \gg 1$, these estimates are distinctly smaller than the bounds $2^d$ holding for cubes.

Motivated by a question of Prof. Przemys\l{}aw G\'orka (personal communication), in the last section we show that averaging operators are of strong type (1,1) for arbitrary, locally finite $\tau$-additive Borel measures $\mu$ on a metric space $X$, with bounds   independent of 
$\mu$ and of $r$, if and only if $X$ satisfies a weaker version of the Besicovitch  intersection property, called here the equal radius Besicovitch  intersection property, cf. Definition \ref{BIP} for the precise statement. It follows from the preceding results that if we have uniform weak type $(p,p)$ bounds for the centered maximal operator and some
$1 \le p < \infty$, then averaging operators are uniformly bounded on $L^1$.

I am indebted to Prof. Javier P\'erez L\'azaro for his careful reading of
this paper, as well as several useful suggestions, and to Eduardo Tablate Vila, for simplifying the proof of Lemma \ref{simple}.

\section {Definitions and general results} 

We will use $B^{o}(x,r) := \{y\in X: d(x,y) < r\}$ to denote metrically open balls, 
 and 
$B^{cl}(x,r) := \{y\in X: d(x,y) \le r\}$ to refer to metrically closed balls;
open and closed will
always be understood  in the metric (not the topological) sense. 
 If we do not want to specify whether balls are open or  closed,
we write $B(x,r)$. But when we utilize $B(x,r)$,  all balls are taken to be of the same kind, i.e., all open or all closed. Also, whenever we speak of balls,
we assume that suitable centers and radii have  been chosen (recall that in
general neither centers nor radii are unique). 

\begin{definition}  \label{tau} Let $(X, d)$ be a metric space. A Borel measure $\mu$ 
	on $X$ is   {\em $\tau$-additive} or {\em $\tau$-smooth}, if for every
	collection  $\{O_\alpha : \alpha \in \Lambda\}$
	of  open sets, we have
	$$
	\mu (\cup_\alpha O_\alpha) = \sup_{\mathcal{F}} \mu(\cup_{i=1}^n O_{\alpha_i}),
	$$
	where the supremum is taken over all finite subcollections $\mathcal{F} = \{O_{\alpha_1}, \dots, O_{\alpha_n} \}$
	of  $\{O_\alpha : \alpha \in \Lambda\}$.
	If $\mu$  assigns finite measure
	to bounded Borel sets, we say it is {\em locally finite}.
	Finally, we call $(X, d, \mu)$  a {\em metric measure space} if
	$\mu$ is a  $\tau$-additive, locally finite  Borel measure on the metric space $(X, d)$. 
	\end{definition}

The preceding definition includes all locally finite Borel measures on 
separable metric spaces and all Radon measures on arbitrary metric spaces, 
so it is more general than other
commonly used
definitions, cf. \cite{HKST} for instance. From now on we always suppose that measures are locally finite, 
not identically zero, and  that metric spaces
have at least two points. The condition of local finiteness excludes some natural measures, such as the one on $\R$ given by the density $d\mu(x) = |x|^{-1} dx$; as a matter of fact, the weak type (1,1) theory can be carried out without this assumption, since the average of an $L^1$ function over a ball of infinite measure is zero; but the $L^p$ theory fails for $p > 1$, for it may happen that the maximal function is not well defined almost everywhere.

\begin{definition}\label{maxfun} Let $(X, d, \mu)$ be a metric measure space and let $g$ be  a locally integrable function 
	on $X$. For any subset $S \subset (0,\infty)$, the {\em localized  centered Hardy-Littlewood maximal operator} $M_{S, \mu}$ is given by 
	
	\begin{equation}\label{lHLMFc}
	M_{S, \mu} g(x) := \sup _{\{r \in S : 0 < \mu (B(x, r)) \}} \frac{1}{\mu
		(B(x, r))} \int _{B(x, r)} \vert g\vert d\mu.
	\end{equation}
	Taking $S = (0, \infty)$, we obtain
	the {\em centered maximal operator } 
	$M_{\mu} := 	M_{(0, \infty), \mu} $. 
\end{definition}

 When the radii belong to an open set $S$, 
 by approximation it does not matter in the definition whether one takes the balls
$B(x,r)$  to be open or  closed.
 We
will utilize  the same notation for the maximal operators, specifying which
kind of balls we use whenever needed. 
Also, we often simplify notation by eliminating subscripts when the
meaning is clear from the context. For instance, if only one measure $\mu$ is being
considered, we may write $M$ instead of $M_\mu$.
We use  $\|M_\mu \|_{L^p\to L^{p, \infty}}$ to denote  the  weak type $(p,p)$ ``norm" of
$M_\mu$, and  $\|M_\mu \|_{L^p\to L^{p}}$ to denote its operator norm on $L^p$.

The    Besicovitch intersection property  
appears in \cite{LeRi}, where it is called the weak  Besicovitch
covering property. Our change in terminology is motivated by the fact that  this 
 property  says nothing about sets to be  covered; instead,
given a Besicovitch
family, it controls the
cardinality of the intersections at any given point.

Call a  Besicovitch family  $\mathcal{C}$ {\em intersecting} if 
$\cap \mathcal{C} \ne \emptyset$.
In  \cite{LeRi} the presentation  is local: the space $(X, d)$ has the   Besicovitch
		intersection property with constant $L$,  if there exists an integer $L\ge 1$
	such that for every intersecting Besicovitch family  $\mathcal{C}$, 
	the cardinality of $\mathcal{C}$ is bounded by $L$.

\begin{remark} To see the  equivalence of both formulations, 
	just note that  given any Besicovitch
family $\mathcal{C}$ and any  $z$ with
	$ 
	\sum_{B(x, r) \in \mathcal{C}}   \mathbf{1}_{B(x, r)} (z) > 0$,
	the set 
	$ 
	\{B(x, r) \in \mathcal{C}: z \in B(x, r)\}$	is an intersecting  Besicovitch
	family.
\end{remark}

\begin{proposition} \label{openclosed} A metric space $(X, d)$ has the  Besicovitch
		intersection property with constant $L$  for collections of open balls, if and only if 
		 has the  Besicovitch
		 intersection property for collections of closed  balls, with the same constant.
		\end{proposition}
		
\begin{proof} Denote by $L^o$ and $L^c$ the lowest constants
for collections of open balls and for collections of closed balls, respectively. 
Suppose first that  $L^o < \infty$. Let $\mathcal{C}$ be an intersecting  
 Besicovitch
family   of closed balls, and  select any finite subcollection $\{B^{cl} (x_1, r_1), \dots , B^{cl} (x_N, r_N)\}$. It is enough to
prove that $N \le L^o$. Let 
$t_i := \min\{d(x_j, x_i) : 1 \le j \le N,
j \ne i \}$. Since $t_i > r_i$, it follows that 
$\{B^o (x_1, t_1), \dots , B^o (x_N, t_N)\}$ is an intersecting Besicovitch family
of open balls,
so $N \le L^o$.

Suppose next that  $L^c < \infty$, and let $\mathcal{C}$ be an intersecting
Besicovitch
family   of open balls. Select $y \in \cap \mathcal{C}$, and
replace each ball $B^{o}(x,r) \in \mathcal{C}$ with the closed
ball $B^{cl}(x, d(x, y)) \subset B^{o}(x, r)$. The collection 
$\mathcal{C}^\prime$ so obtained is an intersecting Besicovitch family 
of closed balls, so its cardinality is bounded by $L^c$.
	\end{proof}

The following is a restatement of our main result:

\begin{theorem}\label{Borelequiv} Let $(X, d)$ be a  
	metric  space.  The following are equivalent:
	
	1)  $(X, d)$ has the  Besicovitch intersection property with constant $L$.
	
	2) For every $\tau$-additive, locally finite  Borel measure $\mu$ on $X$, the centered maximal operator
	associated to $\mu$ satisfies 
	$\|M_{\mu}\|_{L^1  \to L^{1,\infty}} \le 
	L$.	
\end{theorem}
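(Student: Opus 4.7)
For direction $(2) \Rightarrow (1)$, I will argue by contraposition. Assume the Besicovitch constant of $(X,d)$ exceeds $L$: there exist a Besicovitch family $\{B(x_i, r_i)\}_{i=1}^{L+1}$ and a common point $y \in \bigcap_i B(x_i, r_i)$. I construct a finite atomic measure $\mu$ assigning mass $\epsilon > 0$ to $y$ and unit mass to each $x_i$; such a measure is automatically locally finite and $\tau$-additive. With $f := \mathbf{1}_{\{y\}}$, the Besicovitch condition forces $B(x_i, r_i) \cap \mathrm{supp}(\mu) = \{y, x_i\}$, whence $M_\mu f(x_i) \ge \epsilon/(1+\epsilon)$ for each $i$. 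Choosing $\lambda$ just below this value gives $\mu(\{M_\mu f > \lambda\}) \ge L+1$, and the ratio $\lambda \mu(\{M_\mu f > \lambda\})/\|f\|_1$ approaches $L+1$ as $\epsilon \to 0^+$, contradicting $\|M_\mu\|_{L^1 \to L^{1,\infty}} \le L$.

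For direction $(1) \Rightarrow (2)$, fix $f \in L^1(\mu)$ and $\lambda > 0$, and let $E := \{M_\mu f > \lambda\}$. For each $x \in E$, pick a witness open ball $B_x = B^o(x, r_x)$ with $\int_{B_x} |f|\, d\mu > \lambda \mu(B_x)$. My plan is to extract from $\{B_x : x \in E\}$ a Besicovitch subfamily $\mathcal{G}$ whose union still contains $E$; then the BIP bound on the multiplicity of $\mathcal{G}$, combined with the witness inequality, will yield the weak-type estimate.

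To extract $\mathcal{G}$, I apply a greedy selection by decreasing radius: a ball $B_x$ is included in $\mathcal{G}$ iff its center $x$ is not already covered by a previously included ball. In general this is realized via a Zorn's-lemma argument applied to the collection of Besicovitch subfamilies $\mathcal{H} \subseteq \{B_x : x \in E\}$ with the additional property that every $x \in E$ for which $B_x \notin \mathcal{H}$ lies in some $B_y \in \mathcal{H}$ with $r_y \ge r_x$ (one checks that this property is preserved by unions of chains). A maximal element $\mathcal{G}$ covers $E$, and it is Besicovitch because, for any two selected balls $B_\alpha, B_\beta$ processed with $r_\alpha \ge r_\beta$, the selection rule yields $d(x_\alpha, x_\beta) \ge r_\alpha \ge r_\beta$, so neither center lies in the other's ball.

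Since $\bigcup \mathcal{G}$ is open and $\mu$ is $\tau$-additive, $\mu(\bigcup \mathcal{G}) = \sup_{\mathcal{G}_0} \mu(\bigcup \mathcal{G}_0)$ over finite subfamilies $\mathcal{G}_0 \subseteq \mathcal{G}$. For each such $\mathcal{G}_0$, the Besicovitch property and BIP give
\[
\mu(\bigcup \mathcal{G}_0) \le \sum_{B \in \mathcal{G}_0} \mu(B) < \frac{1}{\lambda} \int |f| \sum_{B \in \mathcal{G}_0} \mathbf{1}_B \, d\mu \le \frac{L}{\lambda} \|f\|_1,
\]
so passing to the supremum yields $\mu(E) \le \mu(\bigcup \mathcal{G}) \le (L/\lambda)\|f\|_1$. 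The main obstacle I anticipate is the rigorous justification of the existence of a Besicovitch subfamily covering $E$ when the set of radii $\{r_x\}$ is neither bounded nor well-orderable in reverse; this is exactly what the Zorn-type argument above handles, with pre-stratification into countably many scale-bands available as a back-up if a more constructive argument is desired.
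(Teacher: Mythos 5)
Your direction $(2)\Rightarrow(1)$ is correct and is essentially the paper's own argument (the $2)\Rightarrow 1)$ part of Lemma \ref{discreteequiv}). The gap is in $(1)\Rightarrow(2)$: the central claim that one can extract from the witness balls $\{B_x : x\in E\}$ a Besicovitch subfamily whose union still contains $E$ (even up to a $\mu$-null set) is simply false in general, so no selection scheme --- greedy, Zorn, or stratified into scale bands --- can produce it. Take $X=\mathbb{R}$, $\mu=\delta_0+\sum_{n\ge 1}2^{-n}\delta_n$, $f=\mathbf{1}_{\{0\}}$, $\lambda=1/3$. Since $\mu(\mathbb{R})=2$, every closed interval containing $0$ is a witness ball for its center, every witness ball must contain $0$ (otherwise $\int_B f\,d\mu=0$), and $E=\{M_\mu f>1/3\}=\mathbb{R}$. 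If two intervals $B_1,B_2$ both contain $0$ and have centers $0\le z_1< z_2$, then $B_2\supseteq[0,z_2]\ni z_1$, so they cannot both lie in a Besicovitch family; the same happens for two centers $\le 0$. Hence \emph{every} Besicovitch subfamily of witness balls has at most two members, its union is bounded, and it misses all large integers $n$, a set of positive $\mu$-measure. (The weak type inequality itself holds, of course: $\mu(E)=2\le L\|f\|_1/\lambda=6$ with $L=2$; it is only your route to it that breaks.) There is also an internal defect in the Zorn setup: the property you impose on the families $\mathcal{H}$ already forces $\mathcal{H}$ to cover $E$, so every element of your poset is a solution and maximality is irrelevant; the real issue is whether the poset is nonempty, which is exactly the (false) statement to be proved. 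Finally, note that even when the greedy selection can be run (finitely many balls), it only guarantees that the selected family covers the \emph{centers} of the processed balls, which controls $\mu(E)$ only when $\mu$ is carried by those centers.

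This is precisely why the paper does not argue directly with a general $\mu$. Its proof of $(1)\Rightarrow(2)$ is the composition of two steps: Theorem \ref{discretization}, which uses localization of the radii, approximation by simple functions (Lemmas \ref{open} and \ref{simple}), $\tau$-additivity, and a finite subalgebra whose atoms carry the mass, to show that any failure of the uniform weak $(1,1)$ bound is already witnessed by a finitely supported discrete measure; and Lemma \ref{discreteequiv}, where the measure is $\sum_i c_i\delta_{x_i}$, the relevant balls are finitely many and centered at the atoms, and the greedy selection by non-increasing radius yields a Besicovitch family covering all the atoms of $\{M_\mu f>t\}$ --- which, for such a measure, is all that is needed. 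To complete your proof you must insert this discretization step (or an equivalent reduction to finitely many balls centered on a set of full measure); the selection argument alone cannot close the implication.
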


\begin{proof} By Theorem \ref{discretization} below
it is enough to prove the result for weighted finite sums of Dirac deltas.
This is done in  Lemma
\ref{discreteequiv}.
\end{proof}

\begin{lemma}\label{discreteequiv} Let $(X, d)$ be a  
	metric  space.  The following are equivalent:
	
	1)  $(X, d)$ has the  Besicovitch intersection property with constant $L$.
	
	2)  For every  finite  weighted sum of Dirac deltas $\mu := \sum_{i = 1}^N c_i \delta_{x_i}$,
	 the centered maximal operator  satisfies 
	$\|M_{\mu}\|_{L^1  \to L^{1,\infty}}  \le 
	L$.
\end{lemma}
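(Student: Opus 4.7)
The plan is to split the proof into the two implications, which use essentially unrelated arguments. For (1)$\Rightarrow$(2) I would run a greedy covering extraction in the spirit of the Besicovitch covering theorem, and for (2)$\Rightarrow$(1) I would test the weak-type inequality against a specially designed atomic measure built out of a given Besicovitch family.

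To prove (1)$\Rightarrow$(2), fix $\mu = \sum_{i=1}^N c_i \delta_{x_i}$, $f \in L^1(\mu)$, and $\lambda > 0$, and set $E_\lambda := \{M_\mu f > \lambda\}$. Since $\mu(E_\lambda)$ only sees the atoms lying in $E_\lambda$, I would associate to each atom $x_i \in E_\lambda$ a radius $r_i$ with $\int_{B(x_i,r_i)}|f|\,d\mu > \lambda\,\mu(B(x_i,r_i)) > 0$, then process these indices in order of non-increasing $r_i$ and greedily retain a ball $B(x_i,r_i)$ only when $x_i$ has not yet been absorbed by a previously retained ball. The retained subfamily $\mathcal{S}$ then covers every atom of $E_\lambda$, and a short check shows it is a Besicovitch family: the greedy rule directly provides $x_b \notin B(x_a,r_a)$ for any two retained balls with $r_a \ge r_b$, and the radius comparison forces the reverse exclusion $x_a \notin B(x_b,r_b)$. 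Applying hypothesis~(1) gives $\sum_{B\in\mathcal{S}}\mathbf{1}_B \le L$ pointwise, so
\[
\lambda\,\mu(E_\lambda) \le \sum_{B\in\mathcal{S}} \lambda\,\mu(B) < \sum_{B\in\mathcal{S}}\int \mathbf{1}_B |f|\,d\mu = \int \Bigl(\sum_{B\in\mathcal{S}} \mathbf{1}_B\Bigr)|f|\,d\mu \le L\,\|f\|_{L^1(\mu)}.
\]

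For the converse (2)$\Rightarrow$(1), it suffices to verify that every finite intersecting Besicovitch family $\{B(x_1,r_1),\dots,B(x_n,r_n)\}$ with common point $z$ satisfies $n \le L$ (an infinite intersection count at some point clearly contains finite sub-intersections of arbitrary size). The Besicovitch condition rules out $z = x_k$ (which would force $n=1$), so assume $z$ is distinct from every $x_i$. I would then take $\mu := c\,\delta_z + \sum_{i=1}^n \delta_{x_i}$ for small $c > 0$ and $f := \mathbf{1}_{\{z\}}$; the Besicovitch property ensures that $B(x_i,r_i)$ contains exactly the two atoms $z$ and $x_i$, whence $\mu(B(x_i,r_i)) = 1+c$ and $M_\mu f(x_i) \ge c/(1+c)$. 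Choosing $\lambda$ just below $c/(1+c)$ puts all of $x_1,\dots,x_n$ into the level set, so the hypothesis yields $\lambda n \le Lc$; letting first $\lambda \nearrow c/(1+c)$ and then $c \searrow 0$ collapses this to $n \le L$.

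The only subtle point I expect is in the forward direction: verifying that the greedy extraction produces a genuine Besicovitch family. The greedy rule only supplies half of the center-exclusion condition directly, and recovering the symmetric exclusion requires exploiting the ordering by radius together with the open-versus-closed ball bookkeeping (strict versus non-strict inequalities). Everything else -- the covering of $E_\lambda \cap \mathrm{supp}(\mu)$, the interchange of summation and integration, and the two successive limits in the converse -- is mechanical.
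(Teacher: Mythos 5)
Your proposal is correct and follows essentially the same route as the paper: the same greedy selection of balls ordered by non-increasing radii (with the same observation that the radius ordering upgrades the one-sided exclusion to the full Besicovitch condition) for (1)$\Rightarrow$(2), and the same test measure $c\,\delta_z+\sum_i\delta_{x_i}$ with an indicator of the common point for (2)$\Rightarrow$(1), differing from the paper only in the normalization of $f$ and in arguing directly rather than contrapositively.
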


\begin{proof}  First  we show that  1) $\implies$ 2). 
	Let $\mu = \sum_{i = 1}^N c_i \delta_{x_i}$, where 
	$0 < c_i < \infty$. Let $0 \le f\in L^1(\mu)$ have norm $\|f\|_1 > 0$, and let $t > 0$ be such that $\mu \{M_\mu f > t\} > 0$. 
	For each $x_i$ with $M_\mu f(x_i) > t$, select $r_i > 0$ such
	that $ t \mu B(x_i, r_i) < \int_{B(x_i, r_i) } f d\mu$.

	We reorder this finite collection of balls by non-increasing radii;
	to avoid more subscripts, 
	we also	relabel   the chosen balls as $B(y_1, s_1), \dots, B(y_J, s_J)$, 
	so $s_i \ge s_{i + 1}$ and 
	 $\{y_1, \dots, y_J\}$ is just a permutation of 
$\{M_\mu f > t\} \cap \{x_1, \dots, x_N\}$.
Then we apply the standard selection procedure:
	let $B(y_{i_1} , s_{i_1}) := B(y_1, s_1)$ be the ball with largest
	radius, let 
	$B(y_{i_2} , s_{i_2})$ be the first ball in the list
	with 
	$ y_{i_2} \notin B(y_1, s_1)$, and supposing that 
	$B(y_{i_1} , s_{i_1}), \dots , B(y_{i_k} , s_{i_k}) $
	have been chosen, if all the centers $y_i$ have already
	been covered the process stops; otherwise, we let
	$B(y_{i_{k + 1}} , s_{i_{k + 1}})$ be the first ball in the
	list with 	$ y_{i_{k + 1}} \notin \cup_{i = 1}^k B(y_j, s_j)$.

	In this way, we obtain a Besicovitch family  
	$\mathcal{C}^\prime = \{B(y_{i_1} , s_{i_1} ), \dots, B(y_{i_I} , s_{i_I} )\}$
	that covers the set $\{y_1, \dots, y_J\}$. 
	By the  Besicovitch intersection property,
	$ 
	\sum_{B(y, s) \in \mathcal{C}^\prime}   \mathbf{1}_{B(y, s)} \le L$,
	and since 
 $\mu \left(\{M_\mu f > t\} \setminus \{y_1, \dots, y_J\}\right) = 0, 
	$ we have 
	$$
	\mu  \{M_{\mu} f > t\}
	\le 
	\mu ( \cup \mathcal{C}^\prime)
	\le
	\sum_{B(y, s) \in \mathcal{C}^\prime}\mu B (y,  s) 
	$$
	$$
	<  
	\sum_{B(y, s) \in \mathcal{C}^\prime}  \frac{1}{t } \int 
	\mathbf{1}_{B(y,  s)}  \ f  \  d\mu
	=
	\frac{1}{t } \ \int 
	\left(\sum_{B(y, s) \in \mathcal{C}^\prime} \mathbf{1}_{B(y,  s)} \right) \ f  \  d\mu
	\le
	\frac{L}{t } \ \int 
	\ f  \  d\mu.
	$$
	
	For  
	2) $\implies$ 1), we prove that
	if  $\mathcal{C}$  is an intersecting   Besicovitch family in $(X,d)$ of
	cardinality $ > L$, then there exists a discrete measure $\mu_c$ with 
	finite support, for which 
	$\|M_{\mu_c}\|_{L^1-L^{1,\infty}} >
	L$. 
	We may suppose
	that $\mathcal{C} = \{B(x_1, r_1), \dots ,B(x_{L + 1}, r_{L + 1})\}$
	by throwing away some balls if needed.
	Let $y \in \cap \mathcal{C}$, and for $0 < c \ll 1$, define 
	$\mu_c := c \delta_y + \sum_{i= 1}^{L + 1} \delta_{x_i}$. Set
	$f_c = c^{-1} \mathbf{1}_{\{y\}}$. Then $\|f_c\|_1 = 1$ and
	for $1 \le i \le L + 1$, $M_{\mu_c} f_c (x_i) \ge 1/(1 + c)$. Taking $y$ into account, we get
	$$
	\mu_c \{M_{\mu_c} f_c \ge  1/(1 + c)\} = L + 1 + c,
	$$
	 so for 
	$c$ small enough, 	$\mu_c \left( \{M_{\mu_c} f_c \ge  1/(1 + c)\}\right)/(1 + c) > L$.
\end{proof}

The ``local discretization of measures" Theorem \ref{discretization} below,  states that  uniform bounds on weighted finite sums of Dirac deltas, extend to uniform bounds on arbitrary ($\tau$-additive
locally finite) Borel measures. 
Note that $\|M_{\mu}\|_{L^1  \to L^{1,\infty}} $ is not assumed to be finite
in either  Lemma \ref{simple} or in Theorem \ref{discretization}. 

Next we state 
three lemmas, some parts  of which are well known in the absence of localization.
The first one follows by a standard approximation argument, so the proof is omitted.

\begin{lemma}\label{loc} Let $(X, d, \mu)$ be a  
	metric  measure space.  For $0 \le  s < S \le \infty$, the values of the
 localized centered maximal operator
 	$M_{(s,S), \mu}$ are independent  of whether $M_{(s,S), \mu}$ is defined
 	using open or closed balls.
\end{lemma}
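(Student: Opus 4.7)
The plan is to show both inequalities between the two versions of $M_{(s,S), \mu}$ by approximating an open ball of radius $r \in (s,S)$ from inside by closed balls $B^{cl}(x, r')$ with $r' \nearrow r$, and a closed ball of radius $r \in (s,S)$ from outside by open balls $B^o(x, r')$ with $r' \searrow r$. Denote by $M^o$ and $M^{cl}$ the two variants. Because the interval $(s,S)$ is open at both endpoints, any $r \in (s,S)$ has $(s,r)$ and $(r,S)$ nonempty, so the approximating radii may themselves be chosen in $(s,S)$, which is the key reason the lemma is formulated with an open interval.

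First I would verify $M^o g(x) \le M^{cl} g(x)$. Fix $r \in (s,S)$ with $\mu(B^o(x,r)) > 0$. Choose any sequence $r_n \in (s, r)$ with $r_n \nearrow r$. Then $B^{cl}(x, r_n) \nearrow B^o(x, r)$, so by monotone convergence $\mu(B^{cl}(x, r_n)) \to \mu(B^o(x, r))$ and $\int_{B^{cl}(x,r_n)} |g| \, d\mu \to \int_{B^o(x, r)} |g| \, d\mu$. For $n$ large enough the denominator is positive, and the quotient over $B^{cl}(x,r_n)$ converges to the quotient over $B^o(x,r)$; taking the supremum shows that the average over $B^o(x,r)$ is dominated by $M^{cl} g(x)$. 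Varying $r$ yields the desired inequality.

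For the reverse, fix $r \in (s,S)$ with $\mu(B^{cl}(x, r)) > 0$, and pick $r_n \in (r, S)$ with $r_n \searrow r$. Now $B^o(x, r_n) \searrow B^{cl}(x, r)$ (since $d(x,y) < r_n$ for all $n$ is equivalent to $d(x,y) \le r$), and local finiteness of $\mu$ together with the fact that $B^o(x, r_1)$ is bounded (hence of finite measure) lets me apply continuity from above: $\mu(B^o(x, r_n)) \to \mu(B^{cl}(x, r)) > 0$ and, by dominated convergence against $|g| \mathbf{1}_{B^o(x, r_1)} \in L^1(\mu)$, the numerators also converge. Hence the average over $B^{cl}(x,r)$ is a limit of averages over balls appearing in $M^o g(x)$, and $M^{cl} g(x) \le M^o g(x)$.

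I do not expect any serious obstacle here; the only points requiring care are that the approximating radii must stay inside $(s,S)$ (handled by the openness of the endpoints), that one must avoid dividing by zero (handled by passing to the tail of the approximating sequence once one denominator is positive), and that dominated convergence requires local finiteness of $\mu$ together with the observation that all the approximating balls sit inside a single ball of finite measure. These are exactly the hypotheses available, so the argument is routine.
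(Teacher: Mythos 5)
Your argument is correct and is precisely the ``standard approximation argument'' the paper invokes while omitting the details: closed balls of smaller radii exhaust an open ball (monotone convergence), open balls of larger radii shrink to a closed ball (continuity from above, justified by local finiteness), and the openness of the interval $(s,S)$ keeps the approximating radii admissible. No gaps.
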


\begin{lemma}\label{simple} Let $(X, d)$ be a  
	metric  space.  If
	there is a locally finite  Borel measure $\mu$ such that 
	$\|M_{\mu}\|_{L^1-L^{1,\infty}} >  
	L$, then  there exist a $T > 0$,  a ball $B^{o}(y, R)$, and a simple function $f$
	vanishing outside $B^{o}(y, R + T)$, 
	such that 
	$$
	\mu \left(B^{o}(y, R) \cap  \{M_{ \mu} f > t\}\right) > 
	\frac{L}{t } \ \int 
	\ f  \  d\mu.
	$$
\end{lemma}

\begin{proof}  The argument proceeds by using several standard reductions to simpler cases. 	
	If $\|M_{\mu}\|_{L^1-L^{1,\infty}} >
	L$, then we can select $0 \le h\in L^1(\mu)$, $R > 0 $, $t > 0$, and $y \in X$  such that 
	$$
	\mu \left(B^{o}(y, R) \cap  \{M_{ \mu}  h > t\}\right) > 
	\frac{L}{t } \ \int 
	\ h  \  d\mu.
	$$
	An additional approximation argument tells us that for some $T \gg 1$, 
	$M_{ \mu}$ can be replaced in the above inequality by its localized
	variant  $M_{(0, T), \mu}$. 
	
	Clearly,  we only need to consider what happens inside $B^{o}(y, R +  T) $
	to determine the behavior of $M_{(0, T), \mu}$ in $B^{o}(y, R) $, so
	there
	is no loss in assuming that $h$ vanishes identically outside 
	$B^{o}(y, R + T)$.  
	
	Next we show that $h$ can be suitably approximated by a simple function $f$, that is,
	of the form $f = \sum_{i = 1}^J c_i \mathbf{1}_{S_i}$, where the
	$S_i$ are disjoint Borel sets contained in 
	$B^{o}(y, R + T)$, and the coefficients $c_i$ are strictly positive.

	If $h$ is bounded then the result is clear,
	for given any $\varepsilon > 0$ we can always find a simple function
	$f = f \mathbf{1}_{B^{o}(y, R + T)}$ such that $0 \le h \le f $ and $\| f \|_1 < (1 + \varepsilon) \|h\|_1.$
	If $h$ is unbounded,  we choose $H \gg 1$ so that the truncation
	$h \wedge H := \min\{h, H \}$ is sufficiently close to $h$
	(and then  we are back to the previous case) as follows. Set $E := 	B^{o}(y, R) \cap  \{M_{ \mu} h > t\}$ and note that for each $x\in E$ there exists an $r_x > 0$ such that 
$$
t 
<
\frac{1}{ \mu B(x,  r_{x}) } \int_{B (x, r_x) }  h \  d\mu.
$$
By the Monotone Convergence Theorem, there is an $n_x \in \mathbb{N}$ such that 
$$
t 
<
\frac{1}{ \mu B (x,  r_{x}) } \int_{B  (x, r_x) }  h \wedge n_x  \ d\mu.
$$
Let $E_n := E \cap  \{M_{ \mu} (h \wedge n) > t\}$. Then each $E_n$ is measurable, $E_n \subset E_{n + 1}$, and $E = \cup_{n = 1}^\infty E_n$, so 
$$
\mu E = \lim_n \mu E_n > 
	\frac{L}{t } \ \int 
\ h  \  d\mu.
$$
Thus, there exists an $H$ such that  
$$
\mu E_H > 
\frac{L}{t } \ \int 
\ h  \  d\mu
\ge
\frac{L}{t } \ \int 
\ h \wedge H \  d\mu.
$$
\end{proof}
	
	From now on, it will be more convenient to use closed balls. 

\begin{lemma}\label{open} Let $(X, d, \mu)$ be a  
	metric  measure space.  
	For $0 \le s < S \le \infty$, $0 \le f \in L^1(\mu)$,  $t > 0$ and 
	$u \ge 0$, the 
	set 	
	\begin{equation} \label{O}
	 O_{t , u} := \left\{ x \in X : \exists r\in (s,S) \mbox{   \  with  \ }
	 \frac{1}{ \mu B^{cl}(x, r)} \int_{B^{cl}(x, r) } f d\mu > t
	 \mbox{   \  and  \ }
	\mu  B^{cl}(x, r) > u
	\right\}
	\end{equation}
	is open.
\end{lemma}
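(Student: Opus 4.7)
The plan is to show that every $x_0 \in O_{t,u}$ has an open neighborhood contained in $O_{t,u}$. For such an $x_0$ I would fix a witnessing radius $r_0 \in (s,S)$ and write $A := \int_{B^{cl}(x_0,r_0)} f\, d\mu$ and $B := \mu B^{cl}(x_0,r_0)$; by hypothesis $A > tB$ and $B > u$. The strategy is to use a single enlarged radius $r = r_0 + \eta$ that works simultaneously at every point $y$ close to $x_0$; this will succeed because the triangle inequality yields the ball sandwich
$$ B^{cl}(x_0,r_0) \,\subseteq\, B^{cl}(y, r_0 + \eta) \,\subseteq\, B^{cl}(x_0, r_0 + 2\eta) $$
whenever $d(x_0,y) \le \eta$, which controls both the numerator and the denominator of the average at $y$ in terms of the values at $x_0$.

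Next I would choose the cushion $\eta$ using right-continuity of the map $\rho \mapsto \mu B^{cl}(x_0, \rho)$. Since $\mu$ is locally finite, some neighborhood of $B^{cl}(x_0, r_0)$ has finite measure, so continuity from above applied to the nested intersection $B^{cl}(x_0, r_0) = \bigcap_{\rho > r_0} B^{cl}(x_0, \rho)$ gives $\lim_{\rho \downarrow r_0} \mu B^{cl}(x_0, \rho) = B < A/t$. This lets me pick $\eta_0 \in (0, (S - r_0)/2)$ small enough that $\mu B^{cl}(x_0, r_0 + 2\eta_0) < A/t$; I then fix any $\eta \in (0, \eta_0)$ and set $r := r_0 + \eta$, which lies in $(s, S)$.

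It then remains to verify the two defining conditions of $O_{t,u}$ at every $y \in B^o(x_0, \eta)$ using the common radius $r$. The left inclusion of the ball sandwich combined with $f \ge 0$ yields $\int_{B^{cl}(y,r)} f\, d\mu \ge A$, while the right inclusion together with monotonicity of $\mu$ gives $\mu B^{cl}(y,r) \le \mu B^{cl}(x_0, r_0 + 2\eta) < A/t$; multiplying and dividing shows the average at $(y,r)$ exceeds $t$. Finally, the left inclusion also gives $\mu B^{cl}(y,r) \ge B > u$. Hence $B^o(x_0, \eta) \subseteq O_{t,u}$, proving openness.

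The argument has no truly hard step; the only mild technical point is the right-continuity of $\rho \mapsto \mu B^{cl}(x_0, \rho)$, which follows from local finiteness alone, so $\tau$-additivity is not actually invoked here. The strict inequalities $A > tB$ and $B > u$ are precisely what supply the slack needed to absorb the small perturbations in both the centre and the radius.
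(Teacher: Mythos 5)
Your proof is correct and follows essentially the same route as the paper's: both enlarge the witnessing radius slightly, use right-continuity of $\rho \mapsto \mu B^{cl}(x,\rho)$ (from local finiteness and metric closedness of the balls) to control the denominator, and exploit the same triangle-inequality sandwich $B^{cl}(x_0,r_0) \subseteq B^{cl}(y,r_0+\eta) \subseteq B^{cl}(x_0,r_0+2\eta)$ to handle nearby centers with a common radius. The only differences are notational (your additive slack $A/t$ versus the paper's multiplicative $(1+\varepsilon)$ factor).
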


\begin{proof} If $ O_{t , u} $ is empty there is nothing to show, so suppose 
otherwise. Choose $x \in  O_{t , u} $ and $r\in (s,S)$ such that  
$$
\frac{1}{ \mu B^{cl}(x, r)} \int_{B^{cl}(x, r) } f d\mu > t
 \mbox{   \  and  \ }
\mu  B^{cl}(x, r) > u.
$$ 
Fix $\varepsilon >0$ with
$$
\frac{1}{ \mu B^{cl}(x, r)} \int_{B^{cl}(x, r) } f d\mu > (1 + \varepsilon) t.
$$
Select
$0 < \delta < r$ with $r + \delta < S$ and
$
\mu B^{cl}(x, r + \delta ) < (1 + \varepsilon)  \mu B^{cl}(x, r) $ 
(here we use that
 balls are metrically closed).
 Let $y \in   B^{o}(x, \delta /2)$. Then 
 $$
B^{cl}(x, r) \subset  B^{cl}(y, r +  \delta /2) 
\subset  B^{cl}(x, r + \delta ),
$$ 
so $\mu  B^{cl}(y, r +  \delta /2) > u$ and 
$$
(1 + \varepsilon) t  
< 
\frac{1}{ \mu B^{cl}(x, r)} \int_{B^{cl}(x, r) } f d\mu
$$
$$
\le 
\frac{(1 + \varepsilon)}{ \mu B^{cl}(x, r + \delta )} \int_{B^{cl}(y, r + \delta /2) } f d\mu
\le
\frac{1 + \varepsilon}{ \mu B^{cl}(y, r + \delta /2)} 
\int_{ B^{cl}(y, r + \delta  /2)} f d\mu.
$$
\end{proof}

\begin{theorem}\label{discretization} Let $(X, d)$ be a  
	metric  space.  If
	there is a $\tau$-additive, locally finite  Borel measure $\mu$ such that 
$\|M_{\mu}\|_{L^1-L^{1,\infty}} >  
	L$, then  there is a discrete, finite  Borel measure $\nu$ with finite support
	in $X$, for which 
	$\|M_{\nu}\|_{L^1-L^{1,\infty}} >
	L$.	
\end{theorem}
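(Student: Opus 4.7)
I would apply Lemma \ref{simple} first to obtain a simple function $f = \sum c_i \mathbf{1}_{S_i}$ supported in $B^{o}(y, R+T)$ together with a threshold $t > 0$ for which $\mu(B^{o}(y, R) \cap \{M_\mu f > t\}) > (L/t)\int f\,d\mu$. Writing the bad set as the increasing union $\bigcup_{m,n} O_{t+1/m, 1/n}$ of the open sets given by Lemma \ref{open} and using continuity of measure from below, I pass to $t_1 > t$, $u > 0$, and the open set $E_1 := B^{o}(y, R) \cap O_{t_1, u}$, which still satisfies $\mu(E_1) > (L/t)\int f\,d\mu$ and so a fortiori exceeds $(L/t_1)\int f\,d\mu$ by a positive margin. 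I then fix $\varepsilon_1 > 0$ small enough that $\tau := t_1/(1+\varepsilon_1) > t$.

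Next I discretize $\mu$ on $B^{o}(y, R+2T)$, the region that contains every ball relevant to the local operator. The $\tau$-additivity of $\mu$ combined with $\mu(B^{o}(y, R+2T)) < \infty$ forces every family of pairwise disjoint open sets to contain only countably many with positive measure, so maximal $(1/k)$-separated nets in the support are countable and $\mathrm{supp}(\mu|_{B^{o}(y, R+2T)})$ is separable. For a parameter $\eta > 0$ to be chosen, partition this support into countably many Borel pieces $A_j$ of diameter $\le \eta$, refined so that each $A_j$ lies in a single level set of $f$. Pick $z_j \in A_j$, retain the finitely many heaviest pieces $A_1, \ldots, A_N$ so that the discarded $\mu$-mass is at most $\delta$, and set
\[
\nu := \sum_{j=1}^{N} \mu(A_j)\, \delta_{z_j}, \qquad g(z_j) := f(z_j),
\]
which gives $\int g\,d\nu \ge \int f\,d\mu - M\delta$ for $M := \|f\|_\infty$.

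The geometric heart of the argument is the ball comparison. For any $z_j$ with $A_j \cap E_1 \ne \emptyset$, choose $x \in A_j \cap E_1$ with a witnessing radius $r_x$. Since every $A_k$ meeting $B^{cl}(x, r_x)$ has $z_k \in B^{cl}(z_j, r_x + 2\eta)$, and every $A_k$ with $z_k \in B^{cl}(z_j, r_x + 2\eta)$ satisfies $A_k \subset B^{cl}(x, r_x + 4\eta)$, and since $f$ is constant on each $A_k$, one obtains
\[
\int_{B^{cl}(z_j, r_x + 2\eta)} g\,d\nu \ge \int_{B^{cl}(x, r_x)} f\,d\mu - M\delta, \qquad \nu B^{cl}(z_j, r_x + 2\eta) \le \mu B^{cl}(x, r_x + 4\eta).
\]
Coupling these with $\int_{B^{cl}(x, r_x)} f\,d\mu > t_1 \mu B^{cl}(x, r_x)$ then yields $M_\nu g(z_j) > \tau$, provided $\mu B^{cl}(x, r_x + 4\eta) < (1+\varepsilon_1)\mu B^{cl}(x, r_x)$ and $\delta$ is taken small.

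The main obstacle is enforcing the inflation bound $\mu B^{cl}(x, r_x + 4\eta) < (1+\varepsilon_1)\mu B^{cl}(x, r_x)$ uniformly over $x \in E_1$. By right-continuity of $r \mapsto \mu B^{cl}(x,r)$ the bound holds for any fixed $(x, r_x)$ once $\eta$ is small, but the required $\eta$ varies with $x$. I would handle this by letting $B_n$ be the set of $x \in E_1$ admitting a rational witnessing radius $r \in (0, T)$ at level $(t_1, u)$ with $\mu B^{cl}(x, r + 4/n) < (1+\varepsilon_1)\mu B^{cl}(x, r)$; these are Borel sets, monotone increasing in $n$, and their union is $E_1$, since for each $x$ one may pick a rational $r$ just above a real witness (strict inequalities plus right-continuity preserve the witnessing conditions) and then take $n$ large. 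Thus $\mu(B_n) \uparrow \mu(E_1)$, so $\mu(B_n) > (L/\tau)\int f\,d\mu$ for $n$ sufficiently large. Fixing such $n$, taking $\eta := 1/n$, and shrinking $\delta$, the preceding estimate forces $M_\nu g(z_j) > \tau$ whenever $A_j \cap B_n \ne \emptyset$, so
\[
\nu(\{M_\nu g > \tau\}) \ge \sum_{j \le N,\; A_j \cap B_n \ne \emptyset} \mu(A_j) \ge \mu(B_n) - \delta > \frac{L}{\tau}\int g\,d\nu,
\]
which yields $\|M_\nu\|_{L^1 \to L^{1,\infty}} > L$, as required.
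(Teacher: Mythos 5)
Your argument is correct, but the discretization mechanism is genuinely different from the paper's. Both proofs begin by invoking Lemma \ref{simple} and Lemma \ref{open} to reduce to a simple function and an open superlevel set with a lower bound $u>0$ on the measures of the witnessing balls. From there the paper uses $\tau$-additivity to extract \emph{finitely many} centers $x_1,\dots,x_n$ whose small balls capture most of the mass of the superlevel set, and then builds a finite algebra $\mathcal{A}$ generated by the level sets of $f$ together with the finitely many relevant balls $B^{cl}(x_i,s_{x_i})$, $B^{cl}(x_i,r_{x_i})$, $B^{cl}(x_i,r_{x_i}+\delta_{x_i})$; collapsing each atom of $\mathcal{A}$ to a point produces a $\nu$ that agrees with $\mu$ \emph{exactly} on every set that matters, so the inflation parameter $\delta_{x_i}$ can be chosen separately for each of the finitely many centers and no uniform control is needed. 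You instead collapse a countable partition of the (separable) support into pieces of diameter at most $\eta$, which forces you to compare $\mu$-balls of radius $r$ with $\nu$-balls of radius $r+2\eta$ sitting inside $\mu$-balls of radius $r+4\eta$, and hence to secure the bound $\mu B^{cl}(x,r+4\eta)<(1+\varepsilon_1)\mu B^{cl}(x,r)$ \emph{uniformly} over the relevant centers; your exhaustion by the Borel sets $B_n$ (rational witnessing radii plus right-continuity of $r\mapsto\mu B^{cl}(x,r)$) handles this correctly, and your derivation of separability of the support from $\tau$-additivity and local finiteness is the right substitute for the paper's finite-subcover step. The only slip is quantitative: the ball comparison actually gives $M_\nu g(z_j)>\tau-M\delta/\bigl((1+\varepsilon_1)u\bigr)$ rather than $M_\nu g(z_j)>\tau$, because the gap in $\int_{B^{cl}(x,r)}f\,d\mu>t_1\,\mu B^{cl}(x,r)$ is not uniform in $x$; this is harmless (replace $\tau$ by $\tau-M\delta/((1+\varepsilon_1)u)$ in the final display and shrink $\delta$ so that this threshold still exceeds $t$), but it should be said. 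On balance the paper's algebra trick is shorter and sidesteps the measurability and uniformity issues entirely, while your route is more hands-on and makes explicit exactly where $\tau$-additivity and local finiteness are used.
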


\begin{proof} By the preceding lemmas, it is enough to show that
	given  $\varepsilon > 0$, $R , T > 0 $, $t > 0$,  $y \in X$,  
	and a simple function $0 \le f\in L^1(\mu)$ vanishing outside
	$B^{o}(y, R + T)$, it is possible to select a finite discrete measure
	$\nu := \sum_{i = i}^m a_i\delta_{w_i}$, with $a_i > 0$, 
	such that $\| f \|_{L^1(\nu)} =\| f \|_{L^1(\mu)}$ and
	$$
	\mu\left( B^{o}(y, R) \cap  \{M_{(0, T), \mu} f > t\}\right)
	\le
	(1 + \varepsilon )  \nu  \left(B^{o}(y, R) \cap  \{M_{(0, T), \nu} f >  t/(1 + \varepsilon) \}\right),
	$$
	where the localized maximal operators are
	defined using metrically closed balls.

	For each  
	$x \in O_t := B^{o}(y, R) \cap  \{M_{(0, T), \mu} f > t\}$,  select $0 < r_x <  T$ such
	that 
	$$
	t \mu B^{cl}(x, r_x) < \int_{B^{cl}(x, r_x) } f d\mu,
	$$
	and choose
	$0 < \delta_x < T$ so
	that 
	$
	\mu B^{cl}(x, r_x +  \delta_x ) < (1 + \varepsilon) \mu  B^{cl}(x, r_x) $. It follows from Lemma \ref{open} that $O_t$ is open; we select very small radii 
	$0 < s_x < \min\{r_x,  \delta_x \}/2$
	so that for each $x$,  we have $B^{cl}(x, s_x) \subset O_t$. Since  
	$$
	O_t
	= 
	\cup \left\{B^{o}(x, s_x) : x \in O_t\right\},$$
	by $\tau$-additivity we can pick a finite collection of
	centers $x_1, \dots, x_n$ in
	$O_t$ such that
	$$
	\mu O_t < (1 + \varepsilon) 
	\mu \cup_{i= 1}^n B^{o}(x_i, s_{x_i}).
	$$
	Let	$f = \sum_{i = 1}^J c_i \mathbf{1}_{S_i}$, where the
	$S_i$ are disjoint Borel subsets of  
	$B^{o}(y, R + T)$, and the coefficients $c_i$ are strictly positive.
	The next step consists in defining a suitable finite subalgebra $\mathcal{A}$ on 
	$B^{o}(y, R + T)$. We let $\mathcal{A}$ 
	be generated by the sets $S_i$ defining $f$, for $1 \le i \le J$,
	together with  $B^{o}(y, R), B^{o}(y, R + T),  O_t$, and the finite collection of balls 
	$ B^{cl}(x_i, u_i)$, where $u_i$ takes each of the three values
	$s_{x_i}$,  $r_{x_i}$ and $r_{x_i} +  \delta_{x_i}$, for
	$1 \le i \le n$.
	
	Given $z \in B^{o}(y, R + T)$, let 
	$P_z := \cap \{A \in \mathcal{A} : z \in A\}$. The
	sets $P_z$ are the atoms of  $\mathcal{A}$, 
	so  they yield a finite partition
	$\{P_1, \dots , P_m\}$ of $B^{o}(y, R + T)$ by non-empty measurable sets.
	Also, we may assume that for each $1 \le i \le m$, 
	$\mu P_i > 0$, for otherwise we simply disregard a finite
	number of sets of measure zero.  Since
	each $P_i$ 
	cannot be split into smaller sets belonging to $\mathcal{A}$, the
	value of any measure on $\mathcal{A}$ is completely determined by its
	value on these atoms.
	Choose representatives $w_i \in P_i$, $1 \le i \le m$, and set
	$\nu \{w_i\} = \mu P_i$. Then $\nu  = \mu$ on $\mathcal{A}$. Furthermore,   $\nu$ is  defined for  all subsets of $X$, since it is
	discrete measure.
	
	By the $\mathcal{A}$-measurability of $f$ and of the balls
	$B^{cl}(x_i, r_{x_i})$, $1 \le i \le n$, we have that
	$\nu B^{cl} (x_i, r_{x_i}) = \mu B^{cl} (x_i, r_{x_i})$ and
	$$
	\int_{B^{cl}(w_i, r_i) } f d\nu = 
	\int_{B^{cl}(w_i, r_i) } f d\mu.$$
	
	We claim that for $\nu$-almost very point in  
	$\cup_{i= 1}^n B^{cl}(x_i, s_{x_i})$, 
	the inequality
	$(1 + \varepsilon) M_{(0, T), \nu} f > t $ holds. This yields
	the result, since then
	$$
	 \nu  \left(B^{o}(y, R) \cap  \{M_{(0, T), \nu} f >  t/(1 + \varepsilon) \}\right)
	\ge
	\nu \cup_{i= 1}^n B^{cl}(x_i, s_{x_i}) 
	$$
	$$
	=
	\mu \cup_{i= 1}^n B^{cl}(x_i, s_{x_i}) 
	>
	\frac{\mu O_t}{1 +\varepsilon}
	>
	\frac{L}{(1 +\varepsilon) t } \ \int 
	\ f  \  d\mu
	=
	\frac{L}{(1 +\varepsilon) t } \ \int 
	\ f  \  d\nu.
	$$
	To see why the claim is true, recall that the representatives 
	$w_i$  constitute the support of $\nu$. 
	Choose any  $w_j \in  \cup_{i= 1}^n B^{cl}(x_i, s_{x_i})$. For some $1 \le k \le n$, 
	we have $w_j \in B^{cl}(x_k, s_{x_k})$.
	Now 	 
	$$
	B^{cl}(x_k, r_{x_k}) \subset  B^{cl}(w_j, s_{x_k} +  r_{x_k}) 
	\subset  B^{cl}(x_k, r_{x_k} +   \delta_{x_k}),
	$$ 
	so
	$$
	t  < 
	\frac{1}{ \mu B^{cl}(x_k, r_{x_k})} \int_{B^{cl}(x_k, r_{x_k}) } f d\mu
	=
	\frac{1}{ \nu B^{cl}(x_k, r_{x_k})} \int_{B^{cl}(x_k, r_{x_k}) } f d\nu
	$$
	$$
	\le 
	\frac{1}{ \nu B^{cl}(x_k, r_{x_k})} \int_{B^{cl}(w_j, s_{x_k} + r_{x_k}) } f d\nu
	\le
	\frac{1 + \varepsilon}{ \nu B^{cl}(w_j, s_{x_k} + r_{x_k}) } 
	\int_{B^{cl}(w_j, s_{x_k} + r_{x_k}) } f d\nu.
	$$
\end{proof}

A modification of the  proof of 2) $\implies$ 1)  in Lemma
\ref{discreteequiv},  shows that for any $p \in (1, \infty)$, the uniform weak
type $(p,p)$ already implies the  Besicovitch intersection property. 
Recall that the floor function $\lfloor x \rfloor$ denotes the integer part of $x$.

\begin{theorem}\label{extrapolation} Let $(X, d)$ be a  
	metric  space. Each of the following statements implies the next:
	
1)	There exist a $p$ with $1 < p < \infty$ and an integer $N\ge 1$, such that
for every discrete, finite  Borel measure $\mu$ with finite support
in $X$, the centered maximal operator
associated to $\mu$ satisfies 
	$\|M_{\mu}\|_{L^p\to L^{p,\infty}} \le N$.
	
2) The space $(X, d)$ has the  Besicovitch intersection property with constant at most
$\lfloor p^p (p - 1)^{(1-p)} N^p - p \rfloor + 1$.
	
	3)  For every $\tau$-additive, locally finite  Borel measure $\mu$ on $X$, the centered maximal operator
	associated to $\mu$ satisfies 
	$\|M_{\mu}\|_{L^1-L^{1,\infty}} \le 
	\lfloor p^p (p - 1)^{(1-p)} N^p - p \rfloor + 1$.	
	\end{theorem}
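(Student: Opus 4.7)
The implication $2) \Rightarrow 3)$ is immediate from Theorem \ref{Borelequiv}, so the work is entirely in $1) \Rightarrow 2)$. My plan is to adapt the construction used for $2) \Rightarrow 1)$ in Lemma \ref{discreteequiv} to the $L^p$ setting, introducing one free parameter which will be optimized to produce the sharpest resulting constant.

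First I would reduce to showing that every intersecting Besicovitch family $\mathcal{C} = \{B(x_1,r_1),\ldots,B(x_K,r_K)\}$ with common point $y \in \bigcap \mathcal{C}$ satisfies $K \leq \lfloor p^p(p-1)^{1-p} N^p\rfloor$, which by the definition of $L(X,d)$ is exactly $2)$. The case $K = 1$ is trivial, and for $K \geq 2$ the Besicovitch condition forces $y \notin \{x_1,\ldots,x_K\}$ (otherwise $y = x_i$ would place $x_i$ inside every distinct ball $B(x_j,r_j)$ of the family). Next, for a parameter $c > 0$ to be chosen later, I would test hypothesis $1)$ against the finite discrete measure $\mu := c\delta_y + \sum_{i=1}^K \delta_{x_i}$ and the function $f := \mathbf{1}_{\{y\}}$. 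One computes $\|f\|_{L^p(\mu)}^p = c$, and by the Besicovitch property each ball $B(x_i,r_i)$ contains $y$ and $x_i$ but no other $x_j$, so $\mu B(x_i,r_i) = 1+c$ and $\int_{B(x_i,r_i)} f\,d\mu = c$, hence $M_\mu f(x_i) \geq c/(1+c)$. Applying the weak $(p,p)$ bound at any $t' < c/(1+c)$ and letting $t' \uparrow c/(1+c)$ produces
$$
\left(\frac{c}{1+c}\right)^p (K+c) \leq N^p c, \qquad \text{equivalently,} \qquad K + c \leq N^p\,\frac{(1+c)^p}{c^{p-1}}.
$$

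The final and only nonroutine step is to choose $c$ optimally. A brief derivative computation shows that $c \mapsto (1+c)^p/c^{p-1}$ is minimized on $(0,\infty)$ at $c = p-1$, with minimum value $p^p/(p-1)^{p-1} = p^p(p-1)^{1-p}$. Taking $c = p-1$ therefore gives $K \leq N^p p^p(p-1)^{1-p} - (p-1)$, and since $K$ is a non-negative integer and $p - 1 > 0$ this forces $K \leq \lfloor p^p(p-1)^{1-p} N^p \rfloor$, as required. The main obstacle is really just recognizing that $c = p-1$ is the right choice of parameter; once that is spotted, everything else is a direct $L^p$ transcription of the discrete weak-$(1,1)$ construction already carried out in Lemma \ref{discreteequiv}.
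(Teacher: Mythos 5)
Your proposal is correct and follows essentially the same route as the paper: the same test measure $c\delta_y+\sum_i\delta_{x_i}$ with $f=\mathbf{1}_{\{y\}}$, the same weak-type computation, and the same optimization at $c=p-1$ (the paper phrases the step $1)\Rightarrow 2)$ contrapositively and maximizes $c^{1/q}/(1+c)$, which is the identical calculation). Your explicit observation that $y\notin\{x_1,\dots,x_K\}$ is a small point left implicit in the paper, but otherwise the arguments coincide.
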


\begin{proof} Recall that  2) $\implies$ 3) has already been 
	proved in Lemma \ref{discreteequiv}, with a different expression for the constant.  Regarding
1) $\implies$ 2),  let $q = p/(p - 1)$ be the dual exponent of $p$,  let $\mathcal{C} = \{B(x_1, r_1), \dots ,B(x_{J}, r_{J})\}$ be an intersecting  Besicovitch family  in $(X,d)$, and let $y \in \cap \mathcal{C}$. Define, for $ c > 0$, the measure
$\mu_c :=  c \delta_y + \sum_{i= 1}^{J} \delta_{x_i}$, and
recall that for every $\alpha > 0$, 
\begin{equation}\label{weaktypepM}
\mu_c (\{M_{\mu_c} f \ge \alpha\}) \le \left(\frac{\|M_{\mu_c}\|_{L^p\to L^{p,\infty}} \|f\|_{L^p}}{\alpha}\right)^p.
\end{equation}
Set
$f = \mathbf{1}_{\{y\}}$; then $\|f\|_{L^p(\mu_c)} = c^{1/p}$. For $1 \le i \le J$, we have $M_{\mu_c} f (x_i) \ge c/(1 + c)$. 
Thus,
$\mu_c \{M_{\mu_c} f \ge  c/(1 + c)\} = J  + c$ (taking $y$ into account) so 
with $\alpha = c/(1 + c)$, we have 
\begin{equation}\label{weaktypepM1}
J + c
\le
\left(\frac{\|M_{\mu_c}\|_{L^p\to L^{p,\infty}} (1 + c)}{c^{1/q}}\right)^p.
\end{equation}
Maximizing $g(c) = c^{1/q}/(1 + c)$ we get $c = p - 1$
and $g(p - 1) = (p - 1)^{(p - 1)/p} p^{-1}$,  so
\begin{equation}\label{weaktypepM11}
J +  p - 1
\le
\|M_{\mu_c}\|_{L^p\to L^{p,\infty}}^p \  p^p \ (p - 1)^{1 - p} 
\le
N^p \ p^p \ (p - 1)^{1 - p}.
\end{equation}
\end{proof}

\begin{remark} The extrapolation result  1) $\implies$ 3) tells us that for any
$1 < p < \infty$, uniform weak type $(p,p)$ bounds of size $N$ entail  uniform weak type $(1,1)$ bounds of size less than
$p^p (p - 1)^{(1-p)} N^p$. Once we have these, 
by interpolation we get the following strong
type $(p,p)$ bounds:
$$
 \|M_{\mu}\|_{L^p\to  L^{p}} 
 \le  
  \frac{p^2 N}{(p -1)^{2  - 1/p}},
 $$
(cf. \cite[p. 42, Exercise 1.3.3 (a)]{Gra}) so the factor
$ 
 p^2  (p -1)^{- 2  + 1/p}
 $
 bounds from above the ratio between the uniform strong and uniform weak $(p,p)$ bounds,
 for every $p \in (1, \infty)$. 
 
 Note however that this bound might considerably 
 overestimate the actual ratio, since in general interpolation will
 not yield the best possible  constants, and occasionally it may yield 
 bounds that are very far from optimality; for instance, it is known 
 that 
for  $p = 2$, for Lebesgue measure in $\mathbb{R}^d$, and for balls defined by an arbitrary
norm,
optimal constants are uniformly bounded by 140 in every dimension
(see \cite[Theorem 5.2]{DeGuMa}).
 For cubes (balls with respect to the $\ell_\infty$ norm), J. Bourgain 
 has proved that  dimension independent
bounds hold for every $p > 1$, cf.
 \cite{Bou1}. However, for cubes it is also known that the weak type (1,1) constants diverge to
infinity with the dimension  (cf. \cite{Al}) and thus, for every $p \in (1, \infty)$, so do the bounds obtained by interpolation.
\end{remark}

\section{Consequences for $\mathbb{R}^d$}

Again  we take balls to be closed. Recall that  $L(\mathbb{R}^d, \| \cdot\|)$  denotes
the Besicovitch constant of $(\mathbb{R}^d , \|\cdot\|)$.
The  definition of strict Hadwiger number  comes from \cite[p. 123]{MaSw}, but this notion
had been used before.

\begin{definition} Let $\|\cdot\|$ be any norm on 
 $\mathbb{R}^d$. The  {\em  Hadwiger number} or {\em  translative kissing number} 
 $H(d, \| \cdot\|)$, is the maximum number of 
  translates of the closed unit ball $B^{cl} (0, 1)$ that can touch $B^{cl} (0, 1)$ without overlapping,
  i.e., all the translates have disjoint interiors.
The  {\em strict Hadwiger number} $H^*(d, \| \cdot\|)$ is the maximum number of 
  translates of the closed unit ball $B^{cl} (0, 1)$ that can touch $B^{cl} (0, 1)$ without touching each other,
  that is, all the translates are disjoint.
 A {\em  spherical code}  is a finite set of unit vectors.
	\end{definition}

\begin{theorem} \label{kiss}  Let $\|\cdot\|$ be any norm on 
 $\mathbb{R}^d$. The Besicovitch constant of $(\mathbb{R}^d , \|\cdot\|)$ equals
its strict Hadwiger number, i.e., $L(\mathbb{R}^d, \| \cdot\|) = H^*(d, \| \cdot\|)$. 
\end{theorem}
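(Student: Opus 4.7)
The strategy is to reduce to cardinality estimates on intersecting Besicovitch families, and then to relate such families to configurations of kissing translates via an explicit normalization. By the remark following Definition~\ref{BIP}, $L(\mathbb{R}^d,\|\cdot\|)$ coincides with the supremum of the cardinalities $|\mathcal{C}|$ taken over all intersecting Besicovitch families $\mathcal{C}$, and by the preceding equivalence of open and metrically closed ball formulations, I may work throughout with closed balls.

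For the lower bound $L(\mathbb{R}^d,\|\cdot\|) \ge H^*(d,\|\cdot\|)$, I would start from a maximum strict Hadwiger configuration $\{v_1,\ldots,v_{H^*}\}$, characterized by $\|v_i\|\le 2$ (each translate $B^{cl}(v_i,1)$ meets $B^{cl}(0,1)$) and $\|v_i-v_j\|>2$ (the translates are pairwise disjoint). The key observation is that, after \emph{doubling} the radii, the family $\{B^{cl}(v_i,2)\}$ is an intersecting Besicovitch family: every ball contains the origin because $\|v_i\|\le 2$, while the condition $\|v_i-v_j\|>2$ is precisely the Besicovitch separation requirement.

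For the reverse inequality $L(\mathbb{R}^d,\|\cdot\|) \le H^*(d,\|\cdot\|)$, I would take an arbitrary intersecting Besicovitch family $\{B^{cl}(x_i,r_i)\}_{i=1}^n$ and translate so that $0 \in \bigcap_i B^{cl}(x_i,r_i)$. Shrinking each radius down to $r_i:=\|x_i\|$ preserves both the condition $0 \in B^{cl}(x_i,r_i)$ and the Besicovitch separation $\|x_i-x_j\|>r_j$, which now strengthens to $\|x_i-x_j\|>\max(\|x_i\|,\|x_j\|)$. I would then normalize by setting $v_i := 2x_i/\|x_i\|$, so that $\|v_i\|=2$ and each translate $B^{cl}(v_i,1)$ touches $B^{cl}(0,1)$.

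The main point is to verify $\|v_i-v_j\|>2$, which would show that the translates are pairwise disjoint and hence that $n\le H^*(d,\|\cdot\|)$. Assuming without loss of generality that $\|x_i\|\le\|x_j\|$, I would use the decomposition
\[
\|x_j\|\,x_i - \|x_i\|\,x_j \;=\; \|x_j\|\,(x_i-x_j) + (\|x_j\|-\|x_i\|)\,x_j,
\]
apply the reverse triangle inequality, and invoke the strict inequality $\|x_i-x_j\|>\|x_j\|$ to conclude that $\bigl\|\,\|x_j\|\,x_i-\|x_i\|\,x_j\bigr\|>\|x_i\|\,\|x_j\|$; dividing by $\|x_i\|\,\|x_j\|$ yields $\|v_i-v_j\|>2$. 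This triangle-inequality estimate is the only technical point of the argument; once it is in place, the two inequalities combine to give the desired identity $L(\mathbb{R}^d,\|\cdot\|)=H^*(d,\|\cdot\|)$.
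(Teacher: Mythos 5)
Your proof is correct and follows essentially the same route as the paper: translate so the common point is the origin, push the centers out to the sphere of radius $2$, and verify that the resulting configuration has pairwise separation exceeding $2$ by means of an angular-distance estimate for norms. The only real difference is that your decomposition $\|x_j\|\,x_i-\|x_i\|\,x_j=\|x_j\|\,(x_i-x_j)+(\|x_j\|-\|x_i\|)\,x_j$ supplies a direct, self-contained proof of the inequality
$\bigl\|\,x_i/\|x_i\|-x_j/\|x_j\|\,\bigr\|\ge\bigl(\|x_i-x_j\|-\bigl|\,\|x_i\|-\|x_j\|\,\bigr|\bigr)/\min\{\|x_i\|,\|x_j\|\}$,
which the paper simply cites from \cite{Ma}; the doubling of radii in your lower-bound direction and the preliminary shrinking of radii to $\|x_i\|$ are cosmetic variants of the paper's argument.
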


\begin{proof}  To see that  $L(\mathbb{R}^d, \| \cdot\|) \le  H^*(d, \| \cdot\|)$, let 
$\mathcal{C} := \{B^{cl} (x_1, r_1),  \dots,  B^{cl} (x_n, r_n) \}$  be an intersecting 
Besicovitch family in 
	 $\mathbb{R}^d$ of maximal cardinality. Choose $y \in \cap \mathcal{C}$, 
	 and let $r_y := \min\{ \|x_1 - y\| ,  \dots,  \|x_n - y\|\}$.
By a dilation and a translation, if needed, we can assume that
$y = 0$ and $r_y = 1$. We claim that all the balls in
$\mathcal{C}^\prime := \{B^{cl} (2 x_1/\|x_1\|, 1),  \dots,  B^{cl} (2 x_n/\|x_n\|, 1) \}$ are disjoint,
and clearly they touch $B^{cl} (0, 1)$, so $n \le  H^*(d, \| \cdot\|)$.  To check the claim 
it is enough to verify that any two centers $2 x_i/\|x_i\|$ and $2 x_j/\|x_j\|$ are at distance
$ > 2$, or equivalently, that any two vectors in the spherical code
$ \{ x_1/\|x_1\|,  \dots,  x_n/\|x_n\|\}$  are at distance
$ > 1$.
So choose a pair of centers  $x_i$ and $x_j$ of balls from $\mathcal{C}$, with, say,  $\|x_i\| \ge  \|x_j\|$. 
Since $\|x_i - x_j\|  > \|x_i\|$, using the lower bound for the angular distances from   \cite[Corollary 1.2]{Ma}, we get
\begin{equation*}
\left\|\frac{x_i}{\|x_i\|}-\frac{x_j}{\|x_j\|}\right\|
\ge
\frac{\|x_i - x_j\|  - \left| \|x_i\| - \|x_j\|\right| }{\min\left\{\|x_i\|, \|x_j\|\right\} }
= \frac{\|x_i - x_j\|  - \|x_i\| + \|x_j\|}{ \|x_j\| } 
>
1.
\end{equation*}
 
  For the other direction,  each
set of unit vectors  $S$ satisfying $\| x - y \| > 1$   for all  $x, y \in S$ with $x \ne y$,
defines an intersecting Besicovitch family $\{B^{cl} (x, 1) : x \in S\}$, so
 $L(\mathbb{R}^d, \| \cdot\|) \ge H^*(d, \| \cdot\|)$. 
 \end{proof}

In $\mathbb{R}^d$  there is ``plenty of  room", so it is possible to
construct a measure $\mu$ for which 
the supremum  is attained.

\begin{theorem} \label{kiss}  Let $\|\cdot\|$ be any norm on 
 $\mathbb{R}^d$. Then there exists a discrete measure $\mu$ 
 such that $\|M_{\mu}\|_{L^1\to L^{1,\infty}}  = L(\mathbb{R}^d, \| \cdot\|)$.
\end{theorem}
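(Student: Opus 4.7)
The plan combines the upper bound $\|M_\mu\|_{L^1 \to L^{1,\infty}} \le L(\mathbb{R}^d, \|\cdot\|)$ coming from Theorem \ref{Borelequiv} with a matching lower bound realised by an explicit discrete $\mu$. The main obstacle is that equality cannot be attained using a single extremal Besicovitch configuration: Lemma \ref{discreteequiv} only produces the ratio $(L+c)/(1+c) < L$ from such a configuration with parameter $c > 0$. I will circumvent this by superposing countably many well-separated translates of an extremal configuration, with weights tending to $0$, so that each parameter $c_k$ in a null sequence is available inside one fixed measure.

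Let $L := L(\mathbb{R}^d, \|\cdot\|)$. By the preceding theorem $L = H^*(d, \|\cdot\|) < \infty$, so I can select unit vectors $v_1, \dots, v_L \in \mathbb{R}^d$ with $\|v_i - v_j\| > 1$ whenever $i \ne j$; the family $\{B^{cl}(v_i, 1)\}_{i=1}^L$ is then an intersecting Besicovitch family whose common intersection contains the origin. Pick a sequence $c_k \downarrow 0$ and centers $y_k \in \mathbb{R}^d$ with $\|y_k\| \to \infty$ and $\|y_k - y_\ell\| \ge 10$ for $k \ne \ell$, and set
\[
\mu := \sum_{k=1}^\infty \Bigl( c_k\,\delta_{y_k} + \sum_{i=1}^L \delta_{y_k + v_i}\Bigr).
\]
Since each compact set meets only finitely many of the clusters $y_k + \{0, v_1, \dots, v_L\}$, the measure $\mu$ is discrete and locally finite. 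For each $k$ set $f_k := c_k^{-1}\mathbf{1}_{\{y_k\}}$, so that $\|f_k\|_{L^1(\mu)} = 1$.

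The separation $\ge 10$ ensures that the closed ball $B^{cl}(y_k + v_i, 1)$ meets the support of $\mu$ precisely at $y_k$ and $y_k + v_i$ (no other $y_k + v_j$ lies inside because $\|v_j - v_i\| > 1$, and no point of another cluster is within distance $3$ of $y_k + v_i$). Thus $\mu B^{cl}(y_k + v_i, 1) = 1 + c_k$ and $M_\mu f_k(y_k + v_i) \ge 1/(1 + c_k)$ for each $i$; a small ball about $y_k$ also gives $M_\mu f_k(y_k) \ge 1/c_k$. The superlevel set $\{M_\mu f_k \ge 1/(1 + c_k)\}$ therefore contains all $L + 1$ atoms $y_k, y_k + v_1, \dots, y_k + v_L$, of total $\mu$-mass at least $L + c_k$, whence
\[
\|M_\mu\|_{L^1 \to L^{1,\infty}} \ge \frac{\mu\{M_\mu f_k \ge 1/(1 + c_k)\}}{1 + c_k} \ge \frac{L + c_k}{1 + c_k} \longrightarrow L \quad\text{as } k \to \infty.
\]
Combined with the upper bound $\|M_\mu\|_{L^1 \to L^{1,\infty}} \le L$ from Theorem \ref{Borelequiv}, this yields the required equality.
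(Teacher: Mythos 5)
Your proof is correct and follows essentially the same route as the paper: the paper also superposes countably many translates of an extremal spherical-code configuration (at $3ne_1$, with central weights $n^{-1}\downarrow 0$) and invokes the argument of Lemma \ref{discreteequiv} to get the ratios $(L+c)/(1+c)\to L$, matched against the upper bound from Theorem \ref{Borelequiv}. Your version merely spells out the details the paper leaves implicit, and your separation $\ge 10$ between clusters is a slightly safer choice than the paper's spacing of $3$.
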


\begin{proof}  Given $\| \cdot\|$,  by rescaling if needed we may assume that  $\| e_1 \| =1$. 
Then  we argue as in the
proof of Lemma \ref{discreteequiv}: choose a
spherical code $\{x_1, \dots, x_N\}$ of   cardinality $H^*(d, \| \cdot\|)$,  with minimal separation strictly larger than $1$. 
For $n \ge 1$, set
$\mu_n := n^{-1} \delta_{3 n e_1} + \sum_{i= 1}^{N} \delta_{x_i + 3 n e_1}$,
and let $\mu := \sum_{n=1}^\infty \mu_n$.
\end{proof}

Note that the measure $\mu$ in the preceding result can be chosen to be finite,  by assigning
suitable weights to the measures $\mu_n$. 

The best uniform bound in one dimension  for the {\em uncentered} maximal operator
is $2$,
cf. \cite[Formula (6)]{CaFa}. Since $ L(\mathbb{R}, | \cdot |) = 2$,  
the same uniform bound holds for both the centered and the uncentered operators in dimension 1.
But already in dimension 2 (for squares and discs) the standard gaussian measure provides
an example where the uncentered maximal operator is not
of weak type $(1,1)$, cf. \cite{Sj}.

\begin{corollary} \label{2bounds} Given any norm $
	\|\cdot\|
	$ on the plane,  if the unit ball is a parallelogram then    $
	L(\mathbb{R}^2, \|\cdot\|)  = 4
	$, while 
	$
	L(\mathbb{R}^2, \|\cdot\|)  = 5
	$ 
	in every other case.
\end{corollary}

\begin{proof} This follows from the corresponding results for strict
	Hadwiger numbers, cf. \cite[Proposition 23]{Sw}.
\end{proof}

\begin{corollary} \label{infinitybounds} The sharp uniform bound for the
	centered maximal operator on $(\mathbb{R}^d , \|\cdot\|_\infty)$ 
	is $L(\mathbb{R}^d, \| \cdot\|_\infty) = 2^d$. Furthermore, the bound is attained.
\end{corollary}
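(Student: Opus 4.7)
The plan is to combine Theorem \ref{main} with the two theorems of this section to reduce everything to computing the strict Hadwiger number $H^*(d,\|\cdot\|_\infty)$. Theorem \ref{main} identifies $\sup_\mu \|M_\mu\|_{L^1\to L^{1,\infty}}$ with $L(\mathbb{R}^d,\|\cdot\|_\infty)$, and the identity $L(\mathbb{R}^d,\|\cdot\|) = H^*(d,\|\cdot\|)$ established earlier in this section reduces the computation to the strict kissing number of the cube. The subsequent attainment theorem then produces a discrete measure realizing the supremum for any norm, so the ``attained'' clause will be immediate once the value $2^d$ is confirmed. Everything therefore comes down to showing $H^*(d,\|\cdot\|_\infty)=2^d$.

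For the lower bound $H^*(d,\|\cdot\|_\infty)\ge 2^d$, I would place $2^d$ translates with centers $x_\varepsilon := 2\varepsilon$ indexed by $\varepsilon\in\{-1,+1\}^d$. Each satisfies $\|x_\varepsilon\|_\infty=2$, so $B^{cl}(x_\varepsilon,1)$ touches $B^{cl}(0,1)$; for distinct $\varepsilon\ne\varepsilon'$, the sign vectors differ in some coordinate, giving $\|x_\varepsilon-x_{\varepsilon'}\|_\infty = 4 > 2$, so the translates are pairwise disjoint.

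For the upper bound $H^*(d,\|\cdot\|_\infty)\le 2^d$, my approach is a pigeonhole on orthants. Given $x_1,\dots,x_n$ with $\|x_i\|_\infty=2$ and $\|x_i-x_j\|_\infty>2$ for $i\ne j$, assign to each $x_i$ a sign vector $\varepsilon(i)\in\{-1,+1\}^d$ chosen so that $\varepsilon(i)_k\, x_{i,k}\ge 0$ for every coordinate $k$ (coordinates with $x_{i,k}=0$ admit either sign). Then $x_i$ lies in the closed orthant $C_{\varepsilon(i)}=\{y \in \mathbb{R}^d : \varepsilon(i)_k\, y_k\ge 0 \text{ for all } k\}$. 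If $\varepsilon(i)=\varepsilon(j)$ for some $i\ne j$, then both $x_i,x_j$ lie in $C_{\varepsilon(i)}\cap [-2,2]^d$, and any two points of a single such orthant-slice are at $\ell_\infty$-distance at most $2$, contradicting $\|x_i-x_j\|_\infty>2$. Hence $i\mapsto\varepsilon(i)$ is injective and $n\le 2^d$.

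The argument is essentially elementary, so I do not anticipate any substantial obstacle; the previously established machinery does all the real work. The only point requiring care is that the sign assignment $\varepsilon(i)$ is not canonical when some coordinate of $x_i$ vanishes, but the pigeonhole step goes through for \emph{any} admissible choice, which is all that is needed.
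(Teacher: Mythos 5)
Your proposal is correct and follows essentially the same route as the paper: both reduce the statement to the identity $H^*(d,\|\cdot\|_\infty)=2^d$ via the two preceding theorems, prove the lower bound by placing translates touching the central cube at its $2^d$ vertices, and prove the upper bound by a counting argument that is the paper's ``each touching cube must touch some vertex'' observation, merely phrased as a pigeonhole on closed orthants. The extra care you take with the non-canonical sign assignment when a coordinate vanishes is fine and does not change the argument.
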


\begin{proof} It is enough to check that $H^*(d, \| \cdot\|_\infty) = 2^d$, something that is both well known and
easy to see. The inequality $H^*(d, \| \cdot\|_\infty) \ge 2^d$ follows by placing translates of the unit cube touching
the central cube only at the vertices, and the other direction
follows  by noticing that any  cube
 touching the central cube must touch some vertex. A more general result can be found in \cite[Lemma 3.1]{Ta}.
 \end{proof}

Next we consider euclidean balls. In this context, the translative kissing number is just the kissing
number, $A(d, \theta)$ denotes  the
maximum number of unit vectors in $\mathbb{R}^d$ such that for any pair $x, y$ of them,
$x\cdot y \le \cos \theta$, and $A^\circ (d, \theta)$ is defined  in the same way, but requiring the 
inequality to be strict, so
$x\cdot y <  \cos \theta$. Observe that $A (d, \pi/3) = H(d, \| \cdot\|_2)$,
while $A^\circ (d, \pi/3) = H^*(d, \| \cdot\|_2)$.

\begin{corollary} \label{bounds} The sharp uniform bound for the
	centered maximal operator on $(\mathbb{R}^3 , \|\cdot\|_2 )$, is  $L(\mathbb{R}^3, \|\cdot\|_2) = 12$. Asymptotically we have
\begin{equation}\label{asym}
	(1 + o(1)) \sqrt{\frac{3 \pi}{8}}  \log {\frac{3}{2 \sqrt 2}}
\ 	d^{3/2} \  \left(\frac{2}{\sqrt{3}}\right)^d
\le L(\mathbb{R}^d, \|\cdot\|_2)
	\le
2^{0.401 (1 + o(1)) d}.
\end{equation}
\end{corollary}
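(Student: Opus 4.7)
My approach is to reduce everything to known estimates on the strict kissing number, using the identity $L(\mathbb{R}^d,\|\cdot\|_2) = H^*(d,\|\cdot\|_2) = A^\circ(d,\pi/3)$ established in Theorem \ref{kiss}. After this reduction the whole statement amounts to importing results from the spherical code literature and checking they survive the passage from the ordinary number $A(d,\pi/3)$ (with $x\cdot y\le 1/2$) to the strict variant $A^\circ(d,\pi/3)$ (with $x\cdot y<1/2$).

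For the planar case I would use the five vertices of a regular pentagon inscribed in the unit circle: consecutive vertices have angular separation $2\pi/5>\pi/3$, hence Euclidean distance $2\sin(\pi/5)>1$, showing $A^\circ(2,\pi/3)\ge 5$. For the matching upper bound, if $n$ unit vectors in $\mathbb{R}^2$ had pairwise angular separation strictly larger than $\pi/3$, sorting them around $S^1$ would give $n$ consecutive angular gaps summing to $2\pi$ with each exceeding $\pi/3$, forcing $n<6$. In dimension $3$, the twelve vertices of a regular icosahedron inscribed in $S^2$ realize $A^\circ(3,\pi/3)\ge 12$, since their minimum pairwise angular distance is $\arccos(1/\sqrt 5)>\pi/3$; the matching upper bound follows from $A^\circ(3,\pi/3)\le A(3,\pi/3)=12$, the classical three-dimensional kissing number (Sch\"utte--van der Waerden, Leech).

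For the asymptotics I would invoke the Kabatyanskii--Levenshtein bound $A(d,\pi/3)\le 2^{0.401(1+o(1))d}$, which transfers to $A^\circ$ immediately, together with the Jenssen--Joos--Perkins lower bound on $A(d,\pi/3)$ obtained via a hard-sphere model on $S^{d-1}$, whose constants are exactly $\sqrt{3\pi/8}\,\log(3/(2\sqrt 2))$. In that construction the random code achieves minimum angular separation strictly greater than $\pi/3$ with positive probability, so the same asymptotic estimate applies to $A^\circ$; alternatively, a tiny perturbation of any near-optimal code of minimum angle $\ge\pi/3$ converts $\ge$ into $>$ at the cost of an $o(1)$ fraction of its vectors.

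The main issue, such as it is, will be purely bookkeeping: verifying that each cited bound for the ordinary kissing number survives the strengthening to strict separation. In the small-dimensional constructions this is automatic, since the pentagon and icosahedron already realize strict inequality; for the asymptotic lower bound it requires only the brief perturbation argument above (or the built-in strictness of the JJP construction). No step presents a genuine difficulty beyond locating the correct references.
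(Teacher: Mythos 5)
Your proposal follows essentially the same route as the paper: reduce to $H^*(d,\|\cdot\|_2)=A^\circ(d,\pi/3)$ via Theorem \ref{kiss}, use the pentagon and icosahedron in dimensions $2$ and $3$, and import the Kabatyanskii--Levenshtein upper bound and the Jenssen--Joos--Perkins lower bound. Two points of divergence are worth noting. First, your planar upper bound (sorting the vectors around $S^1$ and observing that $n$ angular gaps each exceeding $\pi/3$ must sum to $2\pi$, so $n\le 5$) is more elementary and self-contained than the paper's appeal to the rigidity of the hexagonal configuration; this is a small improvement. Second, and more importantly, your treatment of the passage from $A(d,\pi/3)$ to $A^\circ(d,\pi/3)$ in the asymptotic lower bound is the weak link. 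The fallback ``tiny perturbation discarding an $o(1)$ fraction of vectors'' is not justified: given a code with some pairs at angle exactly $\pi/3$, there is no a priori reason that deleting $o(N)$ vectors (or nudging them) removes all tight pairs without creating new violations --- the graph of tight pairs can have large independent-set deficiency, and in $d=2$ the hexagon already shows you may need to drop a constant fraction. Your primary justification (that the JJP hard-sphere construction almost surely produces strictly separated codes, since exact equality of an angular distance is a probability-zero event) is plausible but requires opening up the JJP proof. The paper sidesteps both issues with a cleaner argument: $A^\circ(d,\pi/3)\ge A(d,\theta)$ trivially for every $\theta>\pi/3$, and the parameters in the JJP bounds depend continuously on $\theta$, so letting $\theta\downarrow\pi/3$ recovers the same asymptotic constant. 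If you replace your perturbation argument with this monotonicity-plus-continuity step, your proof is complete.
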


\begin{proof}  For $d = 3$ it is well known
that a spherical code of maximal cardinality (12 vectors) can be obtained from the
vertices of a
regular icosahedron inscribed in the unit sphere. Since the minimal separation  between any two
vertices of the icosahedron   is strictly larger than 1, we have $ 12 = A(3, \pi / 3) =  A^\circ (3, \pi / 3)$.

 Regarding the asymptotic   bounds,  the left hand side 
in  formula (\ref{asym}) comes from \cite[Theorem 1]{JeJoPe}; up to constants, it improves
previously known bounds by a factor of $d$.
 Trivially 
 $A^\circ  (d, \pi/3) \ge  A  (d, \theta)$ for every  $\theta > \pi/3$. Using the estimates  in
  \cite[Theorem 2]{JeJoPe} for $ A  (d, \theta)$,  when
 $0 < \theta < \pi/2$, we conclude 
 that the lower bounds given in \cite[Theorem 1]{JeJoPe}
 by taking  $\theta = \pi/3$ are also lower bounds for 
 $A^\circ  (d, \pi/3)$, since for $d$ fixed all the parameters 
 in \cite[Theorem 2]{JeJoPe} depend 
 continuously on $\theta$. And the right hand side of
   (\ref{asym}) follows directly
 from the upper bounds known for $A  (d, \pi/3)$, cf. \cite[Corollary 1, p. 20]{KaLe}, or \cite[Formulas (66) and (49)]{CoSl}.
\end{proof}

\begin{remark} For $d > 3$, the exact values of $A  (d, \pi/3)$ presently  known are 
$A  (4, \pi/3) = 24$ (cf. \cite{Mu}), 
$A  (8, \pi/3) = 240$ and  $A  (24, \pi/3) = 196560$ (cf. \cite[p.12, Table 1.1]{CoSl}) but additional upper and lower
bounds can be found in the literature, cf. \cite{BaVa} for instance. Judging from \cite[Part 1]{SlHaSm},  it would appear that
$A^\circ (4, \pi / 3)  = 22 < 24 =  A (3, \pi / 3)$; however, since the minimal separation for 23 unit vectors is given
as $60.0000000^\circ$, instead of, say,  $\pi/3$, and the packings there are only claimed to be ``putatively optimal", the actual
value of $A^\circ (4, \pi / 3) $ is not clear to me.

Ignoring the terms that are not exponential in $d$, the preceding corollary  entails that 
$
(1 + o(1)) 1.1547^{ d} \le L(\mathbb{R}^d, \|\cdot\|_2)  \le 1.3205^{(1 + o(1)) d},
$
which are the bounds indicated in the introduction. Curiously, the uniform bounds satisfied by the
centered operator associated to cubes are smaller than those associated to euclidean balls in dimensions
2, 3, and 4; in dimension 8 the situation is reversed, since $256 > 240$, and  for $d\gg 1$, the
bounds associated to cubes are much larger. Strict Hadwiger numbers
for other norms have also been studied, cf. for
instance \cite{RoSa}, \cite{Sw1}, \cite{Ta}.

  Denote by $\mu_d (A) := \lambda^d (A\cap B (0,1))$
 the  Lebesgue
 measure  $\lambda^d$ restricted to the euclidean unit ball of $\mathbb{R}^d$; 
 the measures $\mu_d$ provide a concrete family where the exponential factor in the left hand side of 
 (\ref{asym}) is present: by 
 \cite[Remark 2.7]{Al1},
\begin{equation*}
 \|M_{\mu_d} \|_{L^1\to L^{1, \infty}}
  \ge 
\frac{\sqrt{\pi (d+1)}}{\sqrt 6} \left( \frac
{2}{\sqrt 3}\right)^{d}.
\end{equation*}
Regarding the maximal number $\beta_d$ of disjoint collections appearing
in the Besicovitch covering theorem in dimension $d$, it has been studied in 
\cite{Su} for euclidean balls and in \cite{FuLo} for balls
associated to general norms. Specifically, in \cite[Theorem]{Su} it is noted that $\beta_2 = 19$, 
$\beta_3 \le 87$, and asymptotically, $\beta_d$ grows exponentially with $d$, to base at least $8/\sqrt{15}$ and at
most $2.641$. 
Thus,  the bounds for the centered maximal operator using kissing numbers represent a substantial improvement
over the $\beta_d$'s.
\end{remark}

\begin{question} It would be desirable to have a better understanding of the relationship between $L(\mathbb{R}^d, \|\cdot\|_2) = A^\circ  (d, \pi/3)$  and $A(d, \pi/3)$,
a subject that hopefully will be of interest to specialists in spherical codes. For large angles this was solved in
\cite{Ra}. In particular, by \cite[Theorem 1]{Ra}, $A^\circ  (d, \pi/2) = d + 1$  and $A(d, \pi/2) = 2d $. \end{question}

 \section {Boundedness of averaging operators on $L^1$}

 Recall that 
 the complement of
 the support $(\operatorname{supp}\mu)^c := \cup \{ B^{o}(x, r): x \in X, \mu B^{o}(x,r) = 0\}$
 of a Borel  measure
 $\mu$,  is an open set, and hence measurable. 
 
  \begin{definition}\label{maxfun} Let $(X, d, \mu)$ be a metric measure space and let $g$ be  a locally integrable function 
 	on $X$. For each fixed $r > 0$ and each $x\in \operatorname{supp}\mu$, the
 	averaging operator $A_{r, \mu}$ is defined as
 	\begin{equation}\label{avop}
 	A_{r , \mu} g(x) := \frac{1}{\mu
 		(B(x, r))} \int _{B(x, r)}  g \ d\mu.
 	\end{equation}
 \end{definition}

While maximal operators are well defined everywhere, by our convention averaging operators  are defined only on the support of the  measure under consideration.
 
 \begin{definition}\label{full} Let $(X, d)$ be a metric space and let
 	$\mu$ be a locally finite Borel measure on $X$. 
 	If $\mu (X \setminus \operatorname{supp}\mu) = 0$, 
 	we say that $\mu$ has {\em full support}. 
 \end{definition}
 
 By $\tau$-additivity,  if $(X, d, \mu)$ is  a metric measure space, then 
 $\mu$ has full support,
 since the set $X \setminus \operatorname{supp}\mu $ is a union of open balls of measure zero. Thus, averaging operators are a.e. defined on metric measure spaces.

 To  specify whether balls are open or closed,
 we use $A_{r , \mu}^{o} $ and $A_{r , \mu}^{cl} $ for the corresponding operators. 
When we consider only one measure $\mu$, we often   
 write $A_{r } $ instead of $A_{r , \mu} $.

 \begin{definition} \label{conjugate} We call 
 	\begin{equation}\label{conjug}
 	a_r (y)  
 	: =
 	\int_{\operatorname{supp(\mu)}}  \frac{\mathbf{1}_{B(y,r)}(x)}{\mu B(x,r)}  \  d\mu(x) 
 	\end{equation}
 	the {\em conjugate function} to the averaging operator $A_{r}$.
 \end{definition}
 
 Note that the conjugate function $a_r$ is well defined a.e. (whenever $y$ belongs to
 the support of $\mu$). According to \cite[Theorem 3.3]{Al2}, 
 $A_r$ is bounded on
 $L^1(\mu)$ if and only if $a_r \in L^\infty (\mu)$, in which case 
 $\|A_r \|_{L^1(\mu)\to L^1(\mu)} = \|a_r\|_{L^\infty(\mu)\to L^\infty (\mu)} $. 
 We will  use $a_{r }^{o} $ and $a_{r }^{cl} $ to  specify whether balls are open or closed.

 \begin{definition}  \label{BIP}  Denote by
 	$\mathcal{E}(X,d)$ the collection of all Besicovitch families $\mathcal{C}$
 	of $(X, d)$ with the property that all balls in
 	$\mathcal{C}$ have equal radius.  The 
 	{\em equal radius Besicovitch constant}  of $(X, d)$ is
 	\begin{equation}\label{BC}
 	E (X, d) := \sup \left\{	\sum_{B(x,  r) \in \mathcal{C}} \mathbf{1}_{B(x,  r)} (y): y \in X, \ \mathcal{C} \in \mathcal{E}(X,d) \right\}.
 	\end{equation}
 	We say that $(X, d)$ has the  {\em equal radius Besicovitch Intersection Property} with constant $E (X, d)$  if $E (X, d) < \infty$.
 \end{definition}

 \begin{example} It is well known 
that  the Heisenberg groups
 	$\mathbb{H}^n$ with the
 	Kor\'any metric do not have the Besicovitch intersection property, cf.  \cite[pages 17-18]{KoRe} or
 	\cite[Lemma 4.4]{SaWh}. However, they do have the equal radius  Besicovitch intersection property, since they are geometrically doubling.
 	
 	 We mention why geometrically doubling entails $E (X, d) < \infty$.	Recall that  a metric space is {\it geometrically doubling}  if there exists a positive
 	integer $D$ such that every ball of radius $r$ can be covered with no more than $D$ balls
 	of radius $r/2$.  We call the smallest such $D$ the {\em doubling constant} of the space.  Now  $E (X, d) \le D$. To see why,  given an intersecting Besicovitch family $\mathcal{C}$ with equal radius  $r$, choose any $y\in \cap \mathcal{C}$, and note that the centers of all balls in $\mathcal{C}$ form an $r$-net in $B(y,r)$. Cover $B(y,r)$ with  $\le D$ balls of radius $r/2$. Since each such ball contains at most the center of one ball from $\mathcal{C}$, the result follows. 
 	 \end{example}

 While being geometrically doubling does not depend on whether we use open or closed balls in the definition,  the  value of the doubling constant may change. However,  $E(X,d)$ is the same for open and closed balls, as the  next proposition indicates. Since the proof is a straightforward modification of the argument given for Proposition \ref{openclosed}, we omit it.
 
 \begin{proposition}  A metric space $(X, d)$ has the equal radius  Besicovitch
 	intersection property with constant $E$  for collections of open balls, if and only if it
 	has the  equal radius  Besicovitch
 	intersection property for collections of closed  balls, with the same constant.
 \end{proposition}

 The situation regarding the existence of  strong type $(1,1)$ bounds for the averaging operators 
 $A_{r, \mu}$, uniform in both $r$ and $\mu$, is  analogous to the weak type case for the maximal operator, but with the equal radius Besicovitch intersection property replacing the  Besicovitch intersection property.

 \begin{theorem}\label{discreteequivav} Let $(X, d)$ be a  
 	metric  space.  The following are equivalent:
 	
 	1)  The space $(X, d)$ has the  equal radius Besicovitch intersection property with constant $E$.
 	
 	2) For every $r > 0$ and every $\tau$-additive, locally finite  Borel measure $\mu$ on $X$, we have
 	$\|A_{r, \mu}\|_{L^1  \to L^1} \le 
 	E$.	
 	
 	3)  For every  $r > 0$ and every finite  weighted sum of Dirac deltas $\mu := \sum_{i = 1}^N c_i \delta_{x_i}$,
 	the averaging  operator  satisfies 
 	$\|A_{r, \mu}\|_{L^1  \to L^1}  \le 
 	E$.
 \end{theorem}

 \begin{proof} Let us  show that  1) $\implies$ 2). Disregarding a set of measure zero if needed, we suppose that 
 	$X = \operatorname{supp} \mu$, so every ball has positive measure.
 	Fix $y \in X$ and $r > 0$. First we consider  the open balls case. Let $0 < s < r$, let 
 	$$g(x) := \frac{\mathbf{1}_{B^o(y,r)}(x)}{\mu B^o(x,r)},   
 	\mbox{ \ and let \ }
 	g_s(x) := \frac{\mathbf{1}_{B^o(y,s)}(x)}{\mu B^o(x,r)}.
 	$$
 	Since balls are open,
 	$g_s\uparrow g$ everywhere as $s\uparrow r$,
 	so we can use the monotone convergence theorem. Thus, it is enough to show that $\lim_{s\to r}
 	\int_{X} g_s d\mu \le E(X,d)$ to conclude that
 	$\|a^o_r\|_{L^\infty(\mu)\to L^\infty (\mu)} \le E(X,d)$. Then  the result follows, 
 	since $\|A_r \|_{L^1(\mu)\to L^1(\mu)} = \|a_r\|_{L^\infty(\mu)\to L^\infty (\mu)} $ by  \cite[Theorem 3.3]{Al2}.	
 	
 	For the next step, we argue as in the proof of 	
 	\cite[Theorem 3.5]{Al2}, which dealt with the case where $X$ is geometrically doubling. We include the details for the readers convenience.
 	Note first  that $b_1 := \inf \{\mu B^o(x,r) : x \in B^o(y,s)\} > 0$. To see why, observe that for every $x\in B^o(y,s)$ and 
 	every $w\in B^o(y, r - s)$, $d(x, w) \le d(x, y) + d (y, w) < s + r - s$,
 	so $ B^o(y, r - s)\subset B^o(x, r)$ and thus $0 < \mu  B^o(y, r - s) \le b_1$.
 	Now take $0 < \varepsilon \ll 1$, and choose $u_1\in B^o(y,s)$ so that
 	$\mu B^o(u_1, r) < (1 + \varepsilon) b_1$; let 
 	$b_2 := \inf \{\mu B^o(x,r) : x \in B^o(y,s) \setminus B^o(u_1, r) \}$, and select
 	$u_2\in B^o(y,s) \setminus B^o(u_1, r) $ so that $\mu B^o(u_2, r) < (1 + \varepsilon) b_2$;
 	repeat, with 
 	$b_{k + 1} := \inf \{\mu B^o(x,r) : x \in B^o(y,s) \setminus \cup_1^k B^o(u_i, r)  \}$, and
 	$u_{k + 1}\in B^o(y,s) \setminus \cup_1^k B^o(u_i, r) $ so that
 	$\mu B^o(u_{k + 1}, r) < (1 + \varepsilon) b_{k + 1}$. Since the 
 	balls $B^o(u_i, r)$ form a Besicovitch family and all contain $y$, 
 	there is an $m\le E(X,d)$ such that
 	$B^o(y,s) \setminus \cup_1^m B^o(u_i, r) = \emptyset$, and then the process stops.

 	Fix $x\in B^o(y,s)$, and let $i$ be the first index such that $x\in B^o(u_i, r)$. Then
 	$$
 	\frac{\mathbf{1}_{B^o(y,s)}(x)}{\mu B^o(x,r)} 
 	\le 
 	(1 + \varepsilon) \frac{\mathbf{1}_{B^o(y,s) \cap B^o(u_i ,r)}(x)}{\mu B^o(u_i ,r)} 
 	\le 
 	(1 + \varepsilon)  \sum_{j=1}^m\frac{\mathbf{1}_{B^o(y,s) \cap B^o(u_j ,r)}(x)}{\mu B^o(u_j ,r)},
 	$$
 	so
 	\begin{equation*}
 	\int_X g_s d\mu   
 	=
 	\int_X  \frac{\mathbf{1}_{B^o(y,s)}(x)}{\mu B^o(x,r)}  \  d\mu(x) 
 	\le
 	\int_X  (1 + \varepsilon)  \sum_{j=1}^m\frac{\mathbf{1}_{B^o(y,s) \cap B^o(u_j ,r)}(x)}{\mu B^o(u_j ,r)}  \  d\mu(x) 
 	\end{equation*}
 	\begin{equation*}
 	\le 
 	(1 + \varepsilon)   \int_X \sum_{j=1}^m\frac{\mathbf{1}_{B^o(u_j ,r)}(x)}{\mu B^o(u_j , r)}  \  d\mu(x) 
 	\le
 	(1 + \varepsilon)  E (X, d),
 	\end{equation*}
 	and now
 	$a^o_r (y) \le E (X,d) 
 	$ follows by letting $\varepsilon \downarrow 0$ and  $s \uparrow r$.
 	
 	The closed balls case is proven using the result for open balls.  Let $0 \le f \in L^1(\mu)$, 
 	$w\in X$, 
 	$R \ge 1$,  and  $\varepsilon > 0$. By monotone convergence, taking $R \uparrow \infty$, it is
 	enough to show that 
 	$$
 	\|\mathbf{1}_{B (w ,R)} \mathbf{1}_{\{ A^{cl}_{r} f \le R\}} A^{cl}_{r} f\|_{L^1} 
 	\le
 	\varepsilon + (1 + \varepsilon) E (X,d) \|f\|_{L^1}.
 	$$
 	For each $x\in B (w ,R)$, choose $r_x > 0$ so that 
 	$\mu B^{o}(x, r + r_x) < (1 + \varepsilon) \mu B^{cl}(x, r)$,
 	and  let
 	$E_n := \{x \in B(w, R) : r_x > 1/n\}$. Then select $N \gg 1$
 	satisfying  
 	$\mu \left(B(w, R)  \setminus E_N\right) < \varepsilon / R$.
 	Now for all $x\in E_N$, we have
 	$$
 	A^{cl}_{r} f (x)
 	\le
 	\frac{1}{ \mu B^{cl}(x, r)} \int_{B^{o}(x, r + 1/N) } f d\mu
 	$$
 	$$
 	\le
 	\frac{1 + \varepsilon}{ \mu B^{o}(x, r + 1/N) } \int_{B^{o}(x, r + 1/N)} f d\mu = (1 + \varepsilon) A^o_{r + 1/N} f(x),
 	$$
 	so
 	$$
 	\|\mathbf{1}_{B (w ,R)} \mathbf{1}_{\{ A^{cl}_{r} f \le R\}} A^{cl}_{r} f\|_{L^1} 
 	\le
 	\|\mathbf{1}_{B (w ,R) \setminus E_N} \mathbf{1}_{\{ A^{cl}_{r} f \le R\}} A^{cl}_{r} f\|_{L^1} + \|\mathbf{1}_{E_N} A^{cl}_{r} f\|_{L^1} 
 	$$
 	$$
 	\le \varepsilon + 
 	(1 + \varepsilon)\|A^o_{r + 1/N} f\|_{L^1} 
 	\le 
 	\varepsilon + (1 + \varepsilon) E (X,d) \|f\|_{L^1}.
 	$$

 	Since 3) is a special  case of 2), the only  implication left is 3) $\implies$ 1); we prove that
 	if  $\mathcal{C}$  is an intersecting   Besicovitch family in $(X,d)$ of equal radius $r$ and
 	cardinality $ > E$, then there exists a discrete measure $\mu_c$ with 
 	finite support, for which 
 	$\|A_{r, \mu_c}\|_{L^1\to L^{1}} >
 	E$. 
 	We may suppose
 	that $\mathcal{C} = \{B(x_1, r), \dots ,B(x_{E + 1}, r)\}$.
 	Let $y \in \cap \mathcal{C}$, and for $0 < c \ll 1$, define 
 	$\mu_c := c \delta_y + \sum_{i= 1}^{L + 1} \delta_{x_i}$. Set
 	$f_c = c^{-1} \mathbf{1}_{\{y\}}$. Then $\|f_c\|_1 = 1$, and
 	for $1 \le i \le E+ 1$, we have $A_{r, \mu_c} f_c (x_i) = 1/(1 + c)$,  while $A_{r, \mu_c} f_c (y) > 0$. Thus
 	$$
 	\|A_{r, \mu_c} f_c\|_1 >
 	\frac{E + 1}{ 1 + c},
 	$$
 	and the result follows by taking 
 	$c$ small enough.
 \end{proof}
 
 Since $\|A_{r,\mu} \|_{L^\infty(\mu)\to L^\infty(\mu)}  =1$, by  interpolation
 or  by Jensen's inequality (cf. \cite[Theorem 2.10]{Al3})
 for all  $1 < p < \infty$, we have $\|A_{r,\mu}  \|_{L^p(\mu)\to L^p(\mu)}  \le  E(X,d)^{1/p}$. 
 
 \begin{remark} In addition to  having $\sup_{r, \mu}\|A_{r,\mu}  \|_{L^1(\mu)\to L^1(\mu)} =  E(X,d)$, using the same measures and functions it is easy to see that equality also holds for the weak type $(1,1)$ bounds, that is, 
 	$\sup_{r, \mu}\|A_{r,\mu}  \|_{L^1(\mu)\to L^{1, \infty}(\mu)} =  E(X,d)$. In fact, since the function $f_c$ is a scalar multiple of an indicator function, this equality holds in the restricted weak type (1,1) case.
 \end{remark}
 
 The preceding theorem entails that the uniform weak type $(1,1)$ of the centered
 maximal operator is stronger than the uniform strong type $(1,1)$ of the averaging
 operators.
 
 \begin{corollary}  \label{weakstrong}
 	Given any metric space   $(X, d)$, we have 
 	$$
 	\sup_{r, \mu}\|A_{r, \mu }\|_{L^1\to L^{1}} \le \sup_{\mu} \|M_{\mu}\|_{L^1  \to L^{1,\infty}},
 	$$
 	where the supremum on the left hand side is taken over all $r > 0$ and all $\tau$-additive, locally finite Borel measures $\mu$ on $X$, and the supremum on the right, over all such $\mu$.
 \end{corollary} 
 
 \begin{corollary}  \label{L1conv} Let $1 \le p < \infty$.
 	If  $(X, d)$ has the equal radius Besicovitch intersection property, and $\mu$ is a
 	$\tau$-additive Borel measure on $X$, then for  every $f\in L^p(\mu)$,  
 	we have $\lim_{r\to 0}  A_{r} f  =  f$ in $L^p$.
 \end{corollary} 
 
 This corollary follows in a standard fashion from the uniform boundedness of
 the averaging operators (cf. \cite{Al2} for more details).
 
 \vskip .2 cm
 
 Analogously to the case of the centered maximal operator, given any $p \in (1, \infty)$, the uniform weak
 type $(p,p)$  implies the  equal radius Besicovitch intersection property, and consequently, one can extrapolate from uniform
 weak type $(p,p)$ to uniform strong type $(1,1)$.

 \begin{theorem}\label{extrapolation} Let $(X, d)$ be a  
 	metric  space. Each of the following statements implies the next:
 	
 	1)	There exist a $p$ with $1 < p < \infty$ and an integer $N\ge 1$, such that
 	for every discrete, finite  Borel measure $\mu$ with finite support
 	in $X$, and every $r > 0$, the averaging operators
 	$A_{r, \mu}$ satisfy 
 	$\|A_{r, \mu}\|_{L^p\to L^{p,\infty}} \le N$.
 	
 	2) The space $(X, d)$ has the  equal radius Besicovitch intersection property with constant 
 	$\lfloor p^p (p - 1)^{(1-p)} N^p \rfloor$.
 	
 	3)  For every $\tau$-additive, locally finite  Borel measure $\mu$ on $X$ and every $r > 0$,  the averaging operators
 	$A_{r, \mu}$ satisfy 
 	$\|A_{r, \mu}\|_{L^1\to L^{1}} \le 
 	\lfloor p^p (p - 1)^{(1-p)} N^p \rfloor$.	
 \end{theorem}

 \begin{proof} The implication   2) $\implies$ 3) is part of Theorem \ref{discreteequivav}.  Regarding
 	1) $\implies$ 2),  we show that
 	if  $\mathcal{C}$  is an intersecting  Besicovitch family  in $(X,d)$, of
 	cardinality  strictly larger than $\lfloor p^p (p - 1)^{(1-p)} N^p\rfloor$ and equal radius $r$, then there exists a finite
 	sum of weighted Dirac deltas $\mu$, for which 
 	$\|A_{r, \mu}\|_{L^p\to L^{p,\infty}} >
 	N$. 
 	
 	Let $q = p/(p - 1)$ be the dual exponent of $p$, and let $J:= \lfloor p^p (p - 1)^{(1-p)} N^p \rfloor + 1$.	
 	We may suppose
 	that $\mathcal{C} = \{B(x_1, r), \dots, B(x_{J}, r)\}$.
 	Let $y \in \cap \mathcal{C}$, and set, for $ c > 0$, 
 	$\mu_c :=  c \delta_y + \sum_{i= 1}^{J} \delta_{x_i}$.
 	Recall that for every $\alpha > 0$, by definition of the weak $(p,p)$ constant $\|A_{r, \mu_c}\|_{L^p\to L^{p,\infty}}$,
 	\begin{equation}\label{weaktypepM}
 	\mu_c (\{A_{r, \mu_c} f \ge \alpha\}) \le \left(\frac{\|A_{r, \mu_c}\|_{L^p\to L^{p,\infty}} \|f\|_{L^p}}{\alpha}\right)^p.
 	\end{equation}
 	Set
 	$f = \mathbf{1}_{\{y\}}$; then $\|f\|_{L^p(\mu_c)} = c^{1/p}$. For $1 \le i \le J$, we have $A_{r, \mu_c} f (x_i) = c/(1 + c)$. 
 	Thus,
 	$\mu_c \{A_{r, \mu_c} f \ge  c/(1 + c)\} = J $,  so 
 	with $\alpha = c/(1 + c)$, we have 
 	\begin{equation}\label{weaktype}
 	J^{1/p} 
 	\le
 	\frac{\|A_{r, \mu_c}\|_{L^p\to L^{p,\infty}} (1 + c)}{c^{1/q}}.
 	\end{equation}
 	As before, maximizing $g(c) = c^{1/q}/(1 + c)$ we get $c = p - 1$
 	and $g(p - 1) = (p - 1)^{(p - 1)/p} p^{-1}$,  so
 	\begin{equation}\label{weaktype1}
 	N 
 	=
 	\frac{(p -1)^{(p - 1)/p}  \left(p^p (p - 1)^{(1-p)} N^p\right)^{1/p} }{p} 
 	< \frac{(p -1)^{(p - 1)/p}  \ J ^{1/p} }{p} 
 	\le
 	\|A_{r, \mu_c}\|_{L^p\to  L^{p,\infty}}.
 	\end{equation}
 \end{proof}
 
 In what follows  we take balls to be closed. Unlike the case of the Heisenberg groups (where we have $E(X,d) < \infty$ and $L(X,d) = \infty$) in Banach spaces the equality $E(X,d)  = L(X,d)$ always holds.
 
 \begin{theorem} \label{kissme}  If $(X, \|\cdot\|)$ is a
 	Banach space, then  $E(X, \| \cdot\|) = L(X, \| \cdot\|)$. 
 \end{theorem}
 
 \begin{proof}  It suffices to show that 
 	$E(X, \| \cdot\|) \ge L(X, \| \cdot\|)$. Both $E (X, \| \cdot\|)$ and $L (X, \| \cdot\|)$ are defined as suprema, so it is enough to prove that given any finite, intersecting Besicovitch family 
 	$\mathcal{C} := \{B^{cl} (x_1, r_1),  \dots,  B^{cl} (x_n, r_n) \}$, we can produce an equal radius  intersecting 
 	Besicovitch family  of the same cardinality. Choose $y \in \cap \mathcal{C}$; 
 	by a translation and a dilation, we may assume that
 	$y = 0$ and $r = 1$. By repeating the argument from the proof of Theorem  \ref{kiss}, we see that  
 	$\mathcal{C}^\prime := \{B^{cl} (x_1/\|x_1\|, 1),  \dots,  B^{cl} (x_n/\|x_n\|, 1) \}$ is the desired equal radius Besicovitch family. 
 \end{proof}
 
 As is the case with the maximal operator,  in $\mathbb{R}^d$  it is possible to
 construct a measure $\mu$ for which 
 the supremum  is attained, with $r = 1$. We omit the proof.
 
 \begin{theorem} \label{kissmenot}  Let $\|\cdot\|$ be any norm on 
 	$\mathbb{R}^d$. Then there exists a discrete measure $\mu$ 
 	such that $\|A^{cl}_{1,\mu}  \|_{L^1(\mu)\to L^1(\mu)} =  E(\mathbb{R}^d, \| \cdot\|)$.
 \end{theorem}
 
 The equality  $E(X, \| \cdot\|) = L(X, \| \cdot\|)$
 allows one to transfer uniform bounds known for the centered maximal operator to uniform bounds for the averaging operators. 
 
  \begin{remark} It is obvious that $E(\mathbb{R}, \| \cdot\|) = 2$, so  $\sup_{r, \mu}\|A_{r,\mu}  \|_{L^1(\mu)\to L^1(\mu)} = 2$ for every locally finite Borel measure $\mu$ on  $\mathbb{R}$. The special case 
  	$\|A_{r,\mu}  \|_{L^1(\mu)\to L^1(\mu)} \le 2$ for $\mu$ the standard exponential
  	distribution, given by $d \mu (t) = \mathbf{1}_{(0,\infty)} (t) \ e^{-t} dt$, had already been proven  in  \cite[Theorem 4.2]{Al3}.
 \end{remark}

 In higher dimensions, from Corollaries \ref{2bounds}, \ref{infinitybounds}, and \ref{bounds},  we obtain the following
 
 \begin{corollary}  Given any norm $
 	\|\cdot\|
 	$ on the plane,  if the unit ball is a parallelogram then   $\sup_{r, \mu}\|A_{r,\mu}  \|_{L^1(\mu)\to L^1(\mu)} = 4
 	$, while 
 	$\sup_{r, \mu}\|A_{r,\mu}  \|_{L^1(\mu)\to L^1(\mu)} =  5$ in every other case.

 	With balls defined using the $\ell_\infty$ norm,  the sharp uniform bound for $\sup_{r, \mu}\|A_{r,\mu}  \|_{L^1(\mu)\to L^1(\mu)}$ on $(\mathbb{R}^d , \|\cdot\|_\infty)$ 
 	is $2^d$. 
 	
 	For the euclidean norm we have  
 	$\sup_{r, \mu}\|A_{r,\mu}  \|_{L^1(\mu)\to L^1(\mu)} =   12$ in dimension 3. Asymptotically, in dimension $d$ the following bounds hold:
 	\begin{equation}\label{asym}
 	(1 + o(1)) \sqrt{\frac{3 \pi}{8}}  \log {\frac{3}{2 \sqrt 2}}
 	\ 	d^{3/2} \  \left(\frac{2}{\sqrt{3}}\right)^d
 	\le
 	\sup_{r, \mu}\|A_{r,\mu}  \|_{L^1(\mu)\to L^1(\mu)} \le
 	2^{0.401 (1 + o(1)) d}.
 	\end{equation}
 \end{corollary}

 \begin{remark} For $\mathbb{R}^d$ with the euclidean norm and
 	the standard gaussian measure $\gamma$, it was shown in \cite[Theorem 4.3]{Al3}
 	that 
 	$\sup_{r>0}\|A_r\|_{L^1(\gamma)\to L^1(\gamma)} \le (2 + \varepsilon)^d$, whenever  $\varepsilon > 0$ and $d$ is large enough.  
 	Note that the  upper bounds from the preceding result
 	(valid for all measures) represent a substantial improvement.
 \end{remark}

\end{document}